\documentclass{amsart}

\usepackage{amssymb,color}
\usepackage{amsfonts}
\usepackage{amsmath}
\usepackage{euscript}
\usepackage{enumerate}
\usepackage{pdfsync}
\newtheorem{theorem}{Theorem}[section]
\newtheorem{lemma}[theorem]{Lemma}

\newtheorem{prop}[theorem]{Proposition}
\newtheorem{cor}[theorem]{Corollary}
\newtheorem{exa}[theorem]{Example}

\newtheorem*{Theorem1'}{Theorem 1'}

\theoremstyle{definition}

\theoremstyle{remark}

\def\Z{{\bf Z}}

\def\N{{\bf N}}

\begin{document}

\title[Substitution groups of formal power series]{Substitution groups of formal power series}

\author{Agust\'{i}n D'Alessandro}
\address{Department of Mathematics and Statistics, University of Regina, Canada}
\email{dalessandro.ag@gmail.com}

\author{F. Szechtman}
\address{Department of Mathematics and Statistics, University of Regina, Canada}
\email{fernando.szechtman@gmail.com}
\thanks{The second author was supported in part by NSERC discovery grant 2020-04062}

\subjclass[2020]{20D15, 20E18}

%\date{January 1, 2001 and, in revised form, June 22, 2001.}

%\dedicatory{This paper is dedicated to our advisors.}

\keywords{Nottingham group; nilpotent group; $p$-group; exponent of a group}

\begin{abstract} Let $G$ be the group of power series $x+a_2x^2+a_3x^3+\cdots\in R[[x]]$ under substitution,
where $R$ is a commutative ring  with $1\neq 0$ of prime characteristic $p$. Given any $n\geq 1$,
the subgroup $K_n=\{x+a_{n+1}x^{n+1}+a_{n+2}x^{n+2}+\cdots\,|\,  a_i\in R\}$ is normal in $G$,
and the quotient $G_n=G/K_n$ is the group of truncated polynomials over $R$ of degree $\leq n$ under substitution.
In this paper, we compute the exponent of the image of $K_r$ in $G_n$, for all $r,n\geq 1$, indicating in every case a family of elements realizing this exponent.
\end{abstract}

\maketitle

\section{Introduction} 

Let $G$ be the group of power series $x+a_2x^2+a_3x^3+\cdots\in R[[x]]$ under substitution,
where $R$ is a commutative ring  with $1\neq 0$, so that
$f*g=f(g)$ for $f,g\in G$. The first systematic study of $G$ was made
by Jennings \cite{J} in 1954, followed three decades later by Johnson \cite{Jo} and York \cite{Y,Y2}.
Ever since then, $G$ has received and continues to attract considerable attention
due to its remarkable properties and the important role it plays in the theory of pro-$p$-groups when
$R$ is a finite field, in which case $G$ is usually known as the Nottingham group. %One of the striking features
%of~$G$ when $R=\Z_p$, $p$ a prime, is that it is universal, in the sense that it contains a copy of every  
%pro-$p$-group having a countable basis. This was shown by Camina \cite{C}.
We refer to the survey articles by Camina \cite{C} and Babenko \cite{B}, and references therein, for a wealth of 
%other 
properties enjoyed by $G$.

%, extending a prior result unpublished by Leedham-Green and Weiss (based on work by Witt on \cite{?} on Galois theory),

We assume for the remainder of the paper that $R$ has prime characteristic $p$. 
Given any $n\geq 1$, we consider the normal subgroup $K_n=\{x+a_{n+1}x^{n+1}+a_{n+2}x^{n+2}+\cdots\,|\,  a_i\in R\}$ of $G$ and
the quotient $G_n=G/K_n=\{x+a_{2}x^2+\cdots+a_n x^n\,|\,  a_i\in R\}$, the group of truncated polynomials over~$R$ of degree 
$\leq n$ under substitution.
In 1990, York~\cite{Y} computed the exponent of $G_n$ when $p>2$, as well as when $p=2$ provided
$R$ is an integral domain,
exhibiting as well an element
realizing this exponent. According to Babenko \cite[Section 4.2]{B}, finding the exponent
of $G_n$ when $R=\Z_p$ is an important and natural problem, and its exact calculation is
delicate task when $R$ is arbitrary. Writing $\pi_n:G\to G_n$ for the canonical projection,
we set  $H_r=\pi_n(K_r)$, $r\geq 1$. This is trivial
if $n\leq r$, and the subgroup $H_r=\{x+a_{r+1}x^{r+1}+\cdots+a_n x^n\,|\,  a_i\in R\}$ of $G_n$ if $r<n$.
In this paper, we extend York's result by computing the exponent
of $H_r$ for arbitrary $r,n\geq 1$, indicating in every case a family of elements realizing this 
exponent. Our study includes, in particular, the case $r=1$ and $p=2$ left open by York, when, perhaps surprisingly, 
the answer ultimately depends on whether $R$ is a Boolean ring or not. As a byproduct of our computations,
we lay the foundations for the determination of the power-commutator presentation (PCP) of $G_n$, when $R=\Z_p$,
relative to its standard generators, namely $\pi_n(x+x^2),\dots,\pi_n(x+x^n)$. 
This, in turn, will be used to study the automorphism group of $G_n$, taking advantage
of the fact that the PCP of $G_n$ yields that of $G_{n-1}$ (after removing the last generator of $G_n$
and all defining relations involving it) and appealing to the canonical projection $G_n\to G_{n-1}$, $n>1$,
to derive information about $\mathrm{Aut}(G_n)$ from that of $\mathrm{Aut}(G_{n-1})$. For $R=\Z_p$, $p\geq 5$, 
all automorphisms of $G$ are necessarily inner, as shown by Klopsch~\cite{K}. Nothing like this
holds for the finite $p$-groups $G_n$, $n>1$, which admit more symmetry.

For $r\geq 1$, let $t$ be the remainder of dividing $r$ by $p$. In \cite[Section 4]{C}, we find that
$K_r^p\subseteq K_{pr+t}$ when $R$ is a finite field. This easily extends to a general ring $R$,
as found in Theorem \ref{kr} below, and gives the upper bound $p^m$ for the exponent of $H_r$ 
whenever $n\leq p^m r+p^{m-1}t+\cdots+pt+t$. Much of the paper is devoted to show that this
bound is exact if we also have $p^{m-1} r+p^{m-2}t+\cdots+pt+t<n$, except when $r=1$, $p=2$, $n=2^m$, $m>2$,
and $R$ is a Boolean ring. 

If $R$ is a finite field and $p$ is odd, \cite[Theorem 6]{C} proves the stronger result,
attributed to Leedham-Green and McKay,  that $K_r^p=K_{pr+t}$.
Given a finite $p$-group $T$, we may recursively define the subgroups $T(1)=T^p$, $T(m)=T(m-1)^p$ for $m>1$.
In general, $T^{p^m}$ is properly included in~$T(m)$. If $T$ is regular or powerful, then equality holds,
as shown in \cite[Sections 1.2 and 6.1]{LM}. As $H_r$ is, in general, neither regular nor powerful,
(see Example \ref{ejemplo} below), it is not possible the deduce solely from $K_r^p=K_{pr+t}$ that the exponent of $H_r$ is
as high as $p^m$ when $R$ is a finite field, $p$ is odd, and 
$p^{m-1} r+p^{m-2}t+\cdots+pt+t<n\leq p^m r+p^{m-1}t+\cdots+pt+t$. The calculation of the exponent of $H_r$
in this case, and more generally when $R$ is arbitrary, requires considerable effort.

Given any $a,b\in R$ and $f=x+a x^{r+1}+b x^{r+t+1}+\cdots\in K_r$, we demonstrate that
$f^{(p)}=x+A x^{pr+t+1}+B x^{pr+2t+1}+\cdots$,
where $A$ and $B$ are explicitly determined in terms of $a,b,r,t$, and~$p$, and all powers of $x$
between $x^{pr+t+1}$ and $x^{pr+2t+1}$ are shown to have a zero coefficient (besides those between  $x$ and $x^{pr+t+1}$).
This nontrivial result is divided into several cases, depending on whether $p$ is odd or even,
and whether $t\equiv 0,-1\mod p$ or not. The most challenging one occurs when $p>2$
and $t\not\equiv 0,-1\mod p$, dealt with in Propositions \ref{improve} and \ref{improve2}.
Writing $s=pr+t$, we see that $f^{(p)}\in K_s$ has the same shape as $f$ and that $s\equiv t\mod p$.
Our calculation of the exponent of $H_r$ follows by repeatedly applying the foregoing
formula for $f^{(p)}$ from that of $f$, and making suitable initial choices for $a$ and $b$.
We actually prove a more general version of this formula that will be used in the determination
the PCP of $G_n$ relative to its standard generators when $R=\Z_p$.

%The present paper aims to strengthen properties of $G(R)$ first found by Jennings and later extended by York.

\section{Notation} 

Following York~\cite{Y}, we write $f^{(m)}$ for the $m$th iterate of 
$f\in G$ to distinguish it from its $m$th power $f^m$ in $R[[x]]$. York
associates an upper triangular infinite matrix $M$ to any $f\in G$, where $M_{i,j}$ is the coefficient of $x^j$
in $f^i$ for all $1\leq i<j$. If the matrix associated to $g\in G$ is $N$, then
the matrix associated to $f*g$ is $MN$. He also defines the matrix $\Delta$ by $M=I+\Delta$, noting that $M^p=I+\Delta^p$,
so that $f^{(p)}$ can be read off from the first row of $\Delta^p$. Given any $d>1$, we let $L(d)$ stand
for the set of
all sequences $j=(j_0,j_1,\dots,j_p)$ such that $1=j_0<j_1<\cdots<j_p=d$, and set $S(j)=\Delta_{j_0,j_1}\cdots \Delta_{j_{p-1},j_p}$. We write $J(d)$ for the possibly empty set of all $j\in L(d)$ such that $S(j)\neq 0$. With this notation, we have
$$
\Delta^p_{1,d}=\underset{j\in J(d)}\sum S(j),
$$
a formula to be used repeatedly to study $f^{(p)}$. Given $j\in L(d)$, we refer to the differences $j_{i+1}-j_i$, where $0\leq i<p$, as {\em jumps}. 

We will let $r$ and $t$ stand for integers satisfying $r>0$ and $t\geq 0$,
and $a$ and $b$ for arbitrary elements of $R$.

\section{Background}

The next three results hold in arbitrary characteristic.

\begin{lemma}\label{tail} Let $f=x+f_2x^2+\cdots\in G$, $n>1$, and $a\in R$. Then
$$
f+ax^n\equiv f * (x+ax^n)\mod x^{n+1}.
$$
\end{lemma}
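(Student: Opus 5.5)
Since $f*g=f(g)$, the right-hand side is $f(x+ax^n)$, the series $f$ evaluated at $x+ax^n$. The plan is to expand $f(x+ax^n)=\sum_{i\geq 1} f_i\,(x+ax^n)^i$ with $f_1=1$, and reduce each term modulo $x^{n+1}$. No earlier result is needed; this is a direct computation valid over any commutative ring, in any characteristic.

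For each $i\geq 1$, expand by the binomial theorem:
$$
(x+ax^n)^i=\sum_{k=0}^{i}\binom{i}{k}a^k x^{i-k+kn}.
$$
The term with $k=0$ is $x^i$. The term with $k=1$ is $i\,a\,x^{i-1+n}$. For $k\geq 1$ the exponent $i-k+kn=i+k(n-1)$ is at least $i+n-1$. Since $n>1$, every term with $k\geq 2$ has exponent at least $i+2(n-1)\geq n+1$, so it vanishes modulo $x^{n+1}$. The term with $k=1$ has exponent $i+n-1$, which is at most $n$ only when $i=1$. Hence
$$
(x+ax^n)\equiv x+ax^n, \qquad (x+ax^n)^i\equiv x^i \ \ (i\geq 2) \pmod{x^{n+1}}.
$$

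Summing these congruences weighted by $f_i$ gives $f(x+ax^n)\equiv x+ax^n+\sum_{i\geq 2}f_ix^i=f+ax^n \pmod{x^{n+1}}$, which is the claim. The only point needing care is that the infinite sum is legitimate modulo $x^{n+1}$. This holds because $(x+ax^n)^i\in x^iR[[x]]$, so only the finitely many terms with $i\leq n$ contribute modulo $x^{n+1}$. There is no real obstacle beyond tracking the exponent bound $i+k(n-1)\geq n+1$, which is exactly where the hypothesis $n>1$ enters.
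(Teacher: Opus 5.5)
Your proposal is correct and is essentially the paper's proof: both substitute $x+ax^n$ into $f$ and observe that $(x+ax^n)^i\equiv x^i \pmod{x^{n+1}}$ for $i\geq 2$. You simply make explicit the binomial exponent bound $i+k(n-1)\geq n+1$ and the finiteness of the sum modulo $x^{n+1}$, both of which the paper leaves implicit.
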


\begin{proof} This follows from direct substitution, as
$$
f * (x+ax^n)\equiv x+ax^n+f_2(x+ax^n)^2+f_3(x+ax^n)^3+\cdots\equiv f+ax^n\mod x^{n+1}.
$$
\end{proof}

\begin{prop}(Jennings \cite{J})\label{com1} If $r,s>1$ and 
$$
f=x+f_rx^r+\cdots\in K_{r-1},\; g=x+g_sx^s+\cdots \in K_{s-1},
$$
then
$$
[f,g]=x+(r-s)f_r g_s x^{r+s-1}+\cdots\in K_{(r-1)+(s-1)}.
$$
\end{prop}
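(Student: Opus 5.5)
The plan is to compute $f*g$ and $g*f$ modulo $x^{r+s}$ and compare. Then express the commutator through these two compositions, using Lemma \ref{tail} to turn a difference of series into a composition with $x+cx^{r+s-1}$.

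First, by direct substitution:
$$
f*g=g+f_r g^r+\sum_{k>r}f_k g^k .
$$
Since $g=x+g_sx^s+\cdots$, we have $g^r\equiv x^r+r g_s x^{r+s-1}\mod x^{r+s}$. For $k>r$, $g^k\equiv x^k\mod x^{k+s-1}$, and $k+s-1\geq r+s$. Hence
$$
f*g\equiv f+(g-x)+r f_r g_s x^{r+s-1}\mod x^{r+s}.
$$
By symmetry,
$$
g*f\equiv g+(f-x)+s g_s f_r x^{r+s-1}\mod x^{r+s}.
$$
Subtracting gives
$$
f*g\equiv g*f+(r-s)f_rg_s x^{r+s-1}\mod x^{r+s}.
$$

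Next, we convert this congruence into a statement about composition. Applying Lemma \ref{tail} to $g*f$ with $n=r+s-1$ and $a=(r-s)f_rg_s$, we obtain
$$
f*g\equiv (g*f)*(x+(r-s)f_rg_sx^{r+s-1})\mod x^{r+s}.
$$
Two series in $G$ that agree modulo $x^{m}$ differ by right composition with an element of $K_{m-1}$. Indeed, if $u\equiv v \mod x^m$, then $v^{-1}*u\equiv x\mod x^m$, because composing on the left with $v^{-1}$ preserves congruence modulo $x^m$ for series without constant term. Therefore
$$
(g*f)^{-1}*(f*g)\equiv x+(r-s)f_rg_sx^{r+s-1}\mod x^{r+s}.
$$

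Whatever commutator convention the paper uses, $[f,g]$ equals $(g*f)^{-1}*(f*g)$ up to conjugation or inversion. If it is the inverse, the sign flips, which would be absorbed by taking the other convention. Conjugating $x+cx^{r+s-1}+\cdots$ by any element of $G$ preserves the leading term $cx^{r+s-1}$, since $K_{m}/K_{m+1}$ is central in $G/K_{m+1}$; this follows, for instance, from the same computation applied with $r=2$. So I would state the convention $[f,g]=f^{-1}*g^{-1}*f*g$, or whichever matches the stated sign, and check it on the leading term. This gives $[f,g]=x+(r-s)f_rg_sx^{r+s-1}+\cdots\in K_{r+s-2}$.

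The main obstacle is bookkeeping rather than depth. The expansion of $f*g$ must be verified carefully, and in particular the cross terms $f_k g^k$ for $k>r$ must be shown not to contribute below degree $r+s$. The sign must also match the chosen commutator convention.
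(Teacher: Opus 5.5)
Your proof is correct and follows the same route as the paper: you expand $f*g$ and $g*f$ modulo $x^{r+s}$, compare them, and invoke Lemma~\ref{tail}, and your step of composing on the left with $(g*f)^{-1}$ supplies the detail the paper leaves implicit. The hedging about commutator conventions and conjugation is unnecessary, since with $[f,g]=f^{-1}*g^{-1}*f*g$ one has exactly $[f,g]=(g*f)^{-1}*(f*g)$.
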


\begin{proof} We have
$$
f*g\equiv x+g_s x^{s}+\cdots+g_{r+s-1} x^{r+s-1}+f_r x^{r}+\cdots+f_{r+s-1} x^{r+s-1}+r f_r g_s x^{r+s-1}\mod x^{r+s},
$$
and likewise
$$
g*f\equiv x+f_r x^{r}+\cdots+f_{r+s-1} x^{r+s-1}+g_s x^{s}+\cdots+g_{r+s-1} x^{r+s-1}+ s g_s f_r x^{r+s-1}\mod x^{r+s}.
$$
Thus
$$
f*g\equiv g*f+(r-s)f_r g_s x^{r+s-1}\mod x^{r+s},
$$
so Lemma \ref{tail} yields the desired formula.
\end{proof}

\begin{lemma}\label{diryo} Let $f=x+a x^{r+1}+\cdots\in K_r$ and suppose that $\Delta_{u,v}\neq 0$
for some $u<v$. Then $v-u\geq r$. Thus, if $j\in J(d)$, then $j_{i+1}-j_i\geq r$ for all $0\leq i<p$.
\end{lemma}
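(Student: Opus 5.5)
We work directly from the definition of York's matrix. Write $f=x+ax^{r+1}+\cdots=x(1+h)$, where $h=ax^{r}+f_{r+2}x^{r+1}+\cdots$ lies in $x^rR[[x]]$. For $u\geq 1$, the binomial theorem gives
$$
f^u=x^u(1+h)^u=x^u\sum_{k\geq 0}\binom{u}{k}h^k .
$$
Each $h^k$ with $k\geq 1$ lies in $x^{kr}R[[x]]\subseteq x^rR[[x]]$. Hence
$$
f^u\equiv x^u \mod x^{u+r}.
$$

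This congruence says two things about the coefficients $M_{u,v}$ of $x^v$ in $f^u$. First, $M_{u,u}=1$. Second, $M_{u,v}=0$ whenever $u<v<u+r$. Since $\Delta=M-I$, we get $\Delta_{u,v}=0$ for all $u<v<u+r$. Consequently, if $u<v$ and $\Delta_{u,v}\neq 0$, then $v-u\geq r$, which is the first assertion.

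For the second assertion, take $j\in J(d)$. By definition
$$
S(j)=\Delta_{j_0,j_1}\cdots\Delta_{j_{p-1},j_p}\neq 0,
$$
and a product with a zero factor is zero, so every factor $\Delta_{j_i,j_{i+1}}$ is nonzero. We also have $j_i<j_{i+1}$ for each $i$. Applying the first assertion to each factor gives $j_{i+1}-j_i\geq r$ for all $0\leq i<p$.

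No step here is a real obstacle. The one point needing care is the congruence $f^u\equiv x^u \mod x^{u+r}$, which relies only on $h$ having order at least $r$; it holds in any characteristic.
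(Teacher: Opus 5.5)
Your proof is correct and follows the same approach as the paper, whose proof reads only ``This is clear''. You supply the details it omits: the congruence $f^u\equiv x^u$ modulo $x^{u+r}$ forces $\Delta_{u,v}=0$ for $u<v<u+r$, and $S(j)\neq 0$ forces every factor $\Delta_{j_i,j_{i+1}}$ to be nonzero.
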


\begin{proof} This is clear.
\end{proof}

\begin{theorem}\label{je} (Jennings \cite{J}) The following inclusion always holds: $K_r^p\subseteq K_{pr}$.
\end{theorem}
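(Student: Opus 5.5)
The natural route is York's matrix device from the Notation section. Fix $f=x+ax^{r+1}+\cdots\in K_r$ and let $M=I+\Delta$ be its associated matrix, so that $M_{i,j}$ is the coefficient of $x^j$ in $f^i$. Since $R$ has characteristic $p$ and $I$ commutes with $\Delta$, the binomial theorem gives $M^p=I+\Delta^p$. The matrix associated to $f^{(p)}$ is $M^p$, so the coefficient of $x^d$ in $f^{(p)}$ (for $d>1$) is the $(1,d)$ entry of $\Delta^p$. That entry is
$$
\Delta^p_{1,d}=\underset{j\in J(d)}\sum S(j).
$$
It therefore suffices to show that $J(d)$ is empty for all $1<d\leq pr$. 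Then $f^{(p)}\equiv x\mod x^{pr+1}$, that is, $f^{(p)}\in K_{pr}$.

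The key step is Lemma \ref{diryo}. If $j\in J(d)$, then every jump satisfies $j_{i+1}-j_i\geq r$. Summing the $p$ jumps gives
$$
d-1=j_p-j_0\geq pr,
$$
so $d\geq pr+1$. Hence for $d\leq pr$ there is no sequence with nonzero product, $\Delta^p_{1,d}=0$, and the claim follows for individual $p$th powers.

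It remains to pass from $p$th powers of elements to the subgroup $K_r^p$ they generate. For this we only need that $K_{pr}$ is a subgroup, indeed a normal one, of $G$. This holds because $K_{pr}$ is the kernel of the truncation homomorphism $G\to G_{pr}$, which is well defined since substitution respects congruences modulo $x^{pr+1}$ for series without constant term. Products of $p$th powers therefore stay in $K_{pr}$.

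The only point needing care is the identity $M^p=I+\Delta^p$. It requires characteristic $p$ and the fact that $I$ and $\Delta$ commute. The infinite matrices involved cause no trouble: they are upper triangular with finitely many terms in each entry of a product, so the multiplication is legitimate. I expect no real obstacle beyond verifying that the matrix of $f*g$ is $MN$, which the paper takes from York.
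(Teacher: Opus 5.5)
Your proposal is correct and matches the paper's proof: both read $f^{(p)}$ off the first row of $\Delta^p$ and use Lemma \ref{diryo} to force every jump to be at least $r$, so that $\Delta^p_{1,d}\neq 0$ only when $d\geq pr+1$. Your extra remark that $K_{pr}$ is a normal subgroup, so that products of $p$th powers stay in it, makes explicit a step the paper leaves implicit.
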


\begin{proof} Let $f\in K_r$.  We  wish to show that $f^{(p)}\in K_{pr}$. Since $f^{(p)}$
is represented by $M^p=I+\Delta^p$, this translates
as follows: if $d>1$ and $\Delta^p_{1,d}\neq 0$, then $d>pr$.
As $f\in K_r$, successive application of Lemma \ref{diryo} yields $j_1\geq r+1,\dots,j_p\geq pr+1$, as needed.
\end{proof}

\begin{theorem}\label{kr} Let $t$ be the remainder of dividing $r$ by $p$. Then
$K_r^p\subseteq K_{pr+t}$.
In particular, for any $m,n\in\N$ such that $n\leq p^m r+p^{m-1}t+\cdots+pt+t$, the exponent of $K_r$ in
$G_n$ is at most $p^m$.
\end{theorem}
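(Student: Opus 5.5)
The plan is to run the argument of Theorem \ref{je} again, this time tracking the actual values of the entries $\Delta_{u,v}$ instead of only their positions. If $t=0$ there is nothing beyond Theorem \ref{je} to prove, so assume $1\le t\le p-1$; in particular $t$ is invertible mod $p$ and $t\le r$. Write $f=x(1+g)$ with $g=\sum_{k\ge r} g_kx^k$. We must show $\Delta^p_{1,d}=0$ for $pr<d\le pr+t$.

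\textbf{Step 1: the entries that can occur.} Write $d-1=pr+e$ with $0\le e\le t-1$. For $j\in J(d)$, Lemma \ref{diryo} gives $p$ jumps $k_i=j_{i+1}-j_i=r+e_i$ with $e_i\ge 0$ and $\sum e_i=e$. Hence every jump satisfies $k_i\le r+t-1<2r$. Since $f^u=x^u(1+g)^u\equiv x^u(1+ug)\bmod x^{u+2r}$, we get $\Delta_{u,u+k}=u\,g_k$ for $r\le k<2r$. Putting $E_i=e_0+\cdots+e_{i-1}$, so that $j_i=1+ir+E_i$, the formula for $\Delta^p_{1,d}$ becomes
$$
\Delta^p_{1,d}=\sum_{(e_0,\dots,e_{p-1})}\ \prod_{i=0}^{p-1}(1+ir+E_i)\,g_{r+e_i}.
$$
The sum runs over all compositions of $e$ into $p$ nonnegative parts.

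\textbf{Step 2: each summand vanishes mod $p$.} Since $r\equiv t\bmod p$, it suffices to show the following for each such sequence: some $i\in\{0,\dots,p-1\}$ has $v_i:=1+it+E_i\equiv 0\bmod p$. Here $E$ is nondecreasing with $0\le E_i\le e\le t-1$. Because $i\mapsto -(1+it)$ permutes the residues mod $p$, the condition reads $E_i\equiv c_i$, where $c_i$ is the residue of $-(1+it)$. Equivalently, the integer sequence $v_0=1<v_1<\cdots<v_{p-1}\le 1+(p-1)t+e\le pt$ must hit a multiple of $p$. Its steps are $t+e_i$, and the total excess over step size $t$ is at most $t-1$.

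The unperturbed sequence $1+it$ hits $p\cdot\frac{1+i_0t}{p}$ at the unique $i_0$ with $1+i_0t\equiv 0$. The plan is to argue that any perturbation which jumps over this multiple shifts all later terms up by at least $1$. Then, following the orbit $i\mapsto$ next multiple in the (unchanged) arithmetic progression of residues, one shows that the total excess $\le t-1$ is not enough to dodge all multiples of $p$ lying in $(1,pt]$ before $i=p-1$. To make this precise, I would count, for each $i$, the quantity $\lfloor v_i/p\rfloor-\lfloor(1+it)/p\rfloor$ and show that it is bounded by $E_i$ while a miss forces an increase. A possible alternative, if the counting becomes messy, is to interpret $\Delta$ (restricted to these entries) as the derivation $\delta=xg\,\frac{d}{dx}$. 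Then $\Delta^p_{1,d}$ is the coefficient of $x^d$ in $\delta^p(x)$, and one can use that in characteristic $p$ the map $\delta^p$ is again a derivation, together with Jacobson's formula in the Witt algebra. I expect this vanishing of the combinatorial product to be the main obstacle of the proof.

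\textbf{Step 3: the exponent bound.} Once $K_r^p\subseteq K_{pr+t}$ is established, note that $pr+t\equiv t\bmod p$, so the same remainder $t$ persists. Induction then gives $K_r^{p^m}\subseteq K_{p^mr+p^{m-1}t+\cdots+pt+t}$. If $n\le p^mr+p^{m-1}t+\cdots+t$, this subgroup lies in $K_n$, so every element of $H_r=\pi_n(K_r)$ has order dividing $p^m$.
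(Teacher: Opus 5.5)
There is a genuine gap, and it sits exactly where you expected the difficulty. Steps 1 and 3 are correct, but Step 2 claims something false: the individual summands need not vanish.

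\textbf{A counterexample to Step 2.} Take $p=5$, $r=t=3$, $f=x+g_3x^4+g_4x^5+\cdots$ and $d=17$, so $e=1\le t-1$. The compositions $(0,1,0,0,0)$ and $(0,0,1,0,0)$ give $(v_0,\dots,v_4)=(1,4,8,11,14)$ and $(1,4,7,11,14)$. Neither contains a multiple of $5$. The corresponding summands are $1\cdot 4\cdot 8\cdot 11\cdot 14\,g_3^4g_4=3g_3^4g_4$ and $1\cdot 4\cdot 7\cdot 11\cdot 14\,g_3^4g_4=2g_3^4g_4$. The coefficient of $x^{17}$ in $f^{(5)}$ vanishes only because these two cancel.

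This is not an accident of small numbers. If $1+i_0t\equiv 0$ and $2+i_1t\equiv 0 \pmod p$, then $i_1\equiv 2i_0$. So whenever $t\ge 2$ and $i_0>p/2$, one unit of excess inserted at any step $u$ with $i_1\le u<i_0$ dodges every multiple of $p$. Hence no counting argument of the form ``a miss forces an increase that the excess cannot pay for'' can succeed. The vanishing of $\Delta^p_{1,d}$ for $pr<d\le pr+t$ is a cancellation phenomenon; compare Proposition \ref{poli}, which the paper needs for a similar cancellation in the next range.

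Your fallback via Jacobson's formula does not close the gap either. It only rewrites $\delta^p$ as a combination of Lie words of length $p$, and the relevant coefficients are the same unknowns.

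\textbf{The missing idea.} You need a structural reason for the cancellation, which the paper obtains from the fact that $f$ commutes with $f^{(p)}$. Write $f=x+ax^{s+1}+\cdots$ with $a\neq 0$.

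If $s>r$, Theorem \ref{je} gives $f^{(p)}\in K_{ps}\subseteq K_{pr+t}$.

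If $s=r$ and $R$ is an integral domain, write $f^{(p)}=x+bx^{pr+u+1}+\cdots$ with $b\neq 0$. Proposition \ref{com1} gives $[f,f^{(p)}]=x+(r-u)ab\,x^{(p+1)r+u+1}+\cdots$. Since this commutator is trivial, $u\equiv r\equiv t\pmod p$, and hence $u\ge t$.

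An arbitrary $R$ is the image of a domain of characteristic $p$ (a polynomial ring over the prime field). The induced epimorphism of substitution groups respects the subgroups $K_n$, so the result transfers to $R$.

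\textbf{Rescuing your derivation idea.} It can be finished the same way. The map $\delta^p$ is a derivation commuting with $\delta$, and $[x^{r+1}\tfrac{d}{dx},x^{pr+u+1}\tfrac{d}{dx}]=(pr+u-r)x^{(p+1)r+u+1}\tfrac{d}{dx}$. This gives the same congruence, again after reducing to a domain. By your Step 1, the coefficients of $\delta^p(x)$ agree with $\Delta^p_{1,d}$ for $d\le pr+t$, so this settles the claim.
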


\begin{proof} Let $f\in K_r$. If $f^{(p)}$ is trivial there is nothing to do, so we may assume that neither $f$ nor $f^{(p)}$
are trivial. Thus $f=x+a x^{s+1}+\cdots$, where $s\geq r$ and $a\neq 0$. If $s>r$, then Theorem \ref{je} yields 
$f^{(p)}\in K_{ps}\subset K_{pr+t}$. Suppose next that $s=r$ and $R$ is an integral domain.
By Theorem~\ref{je}, $f^{(p)}=x+b x^{pr+u+1}+\cdots$, where $u\geq 0$
and $b\neq 0$. Since $f$ and $f^{(p)}$
commute with each other, Proposition \ref{com1} yields $r\equiv u\mod p$, so $u\geq t$.
In general, there is an integral domain $S$ of characteristic $p$ and a ring epimorphism $S\to R$, which yields a group
epimorphism $G(S)\to G(R)$, so the result for $G(R)$ follows from that for $G(S)$.
\end{proof}

\begin{lemma} (York \cite{Y}) \label{York} If $f\in G$, $u<v$, $p\mid u$, and $p\nmid v$, then $\Delta_{u,v}=0$.
\end{lemma}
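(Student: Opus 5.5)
Recall that $\Delta_{u,v}$ is the coefficient of $x^v$ in $f^u - x^u$, since $M=I+\Delta$ and $M_{u,v}$ is the coefficient of $x^v$ in $f^u$ (for $u<v$ the identity contributes nothing). So the plan is to show directly that when $p\mid u$, the power series $f^u$ only involves exponents divisible by $p$; then the coefficient of $x^v$ with $p\nmid v$ vanishes.

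The key step is the Frobenius property in characteristic $p$. Write $u=pk$. Since $R$ has characteristic $p$, the map $h\mapsto h^p$ is a ring endomorphism of $R[[x]]$: the binomial coefficients $\binom{p}{i}$ vanish for $0<i<p$. For formal power series this extends from polynomials by truncation, because each coefficient of $h^p$ depends on only finitely many coefficients of $h$. Hence, for $f=x+f_2x^2+f_3x^3+\cdots$, we get
$$
f^p = x^p + f_2^p x^{2p} + f_3^p x^{3p}+\cdots,
$$
a power series in $x^p$. Consequently $f^u=(f^p)^k$ is also a power series in $x^p$, and every exponent $v$ occurring in it with nonzero coefficient is a multiple of $p$.

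Therefore, if $p\nmid v$, the coefficient of $x^v$ in $f^u$ is $0$, and since $u<v$ this coefficient equals $\Delta_{u,v}$. The only care needed is justifying the Frobenius identity for infinite series, and that is the truncation argument above. I expect no real obstacle here.
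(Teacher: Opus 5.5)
Your proposal is correct and follows essentially the same route as the paper: write $u=pk$, use the Frobenius identity in characteristic $p$ to see that $f^p$ is a power series in $x^p$, and conclude that $f^u=(f^p)^k$ only involves exponents divisible by $p$. Your truncation remark supplies the small justification for applying the Frobenius identity to infinite series, which the paper leaves implicit.
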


\begin{proof} Writing $u = pw$, we have
$$
f^u=f^{pw}=((x+f_2 x^2+f_3 x^3+\cdots)^p)^w=(x^p+f_2^p x^{2p}+f_3^p x^{3p}+\cdots)^w,
$$
so the only powers of $x$ appearing in $f^u$ must be multiples of $p$.
\end{proof}

\section{The $p$th iterate of certain elements of $K_r$}\label{s3}

\begin{lemma}\label{dirto} If $r\geq t$ and
$f=x+a x^{r+1}+b x^{r+t+1}+\cdots\in K_r$,
then for any $j\in J(d)$, we have
\begin{equation}\label{dirt}
j_{i+1}-j_i=r,\text{ or }j_{i+1}-j_i=r+t,\text{ or }j_{i+1}-j_i>r+t,\quad 0\leq i<p.
\end{equation}
\end{lemma}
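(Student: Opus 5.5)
The plan is to read off the possible jumps directly from the shape of the powers $f^u$, exactly as in Lemma \ref{diryo}, but one step further. Since $j\in J(d)$ means $S(j)\neq 0$, every factor $\Delta_{j_i,j_{i+1}}$ is nonzero. It therefore suffices to prove the following claim. If $u<v$ and $\Delta_{u,v}\neq 0$, then $v-u\in\{r,r+t\}$ or $v-u>r+t$.

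First I make precise what the hypothesis on $f$ says. Writing $f=x+a x^{r+1}+b x^{r+t+1}+\cdots$ means that the coefficients of $x^{r+2},\dots,x^{r+t}$ vanish (this is vacuous if $t=0$). Hence
$$
f=x\,(1+a x^{r}+b x^{r+t}+x^{r+t+1}h),\qquad h\in R[[x]].
$$
Raising to the $u$th power gives
$$
f^u=x^u\,(1+y)^u,\qquad y=a x^{r}+b x^{r+t}+x^{r+t+1}h .
$$
Now $\Delta_{u,v}$ is the coefficient of $x^{v-u}$ in $(1+y)^u=\sum_{i\ge0}\binom{u}{i}y^i$. Here I use that $u<v$, so the identity matrix contributes nothing to this entry.

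Next I locate the exponents that can occur in each $y^i$. For $i=1$, $y$ itself only involves the exponents $r$, $r+t$, and exponents larger than $r+t$. For $i\geq 2$, $y^i$ is divisible by $x^{ir}$, so every exponent occurring in it is at least $2r$. This is where the hypothesis $r\geq t$ enters: it gives $2r\geq r+t$. If $2r>r+t$, all exponents from $y^i$ with $i\ge 2$ exceed $r+t$. If $2r=r+t$, i.e. $r=t$, then $y^i$ with $i\ge2$ contributes exponents that are either equal to $r+t$ or larger than it, and both outcomes are allowed by (\ref{dirt}).

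Consequently, no exponent $k$ with $k<r$ or $r<k<r+t$ appears in $(1+y)^u-1$. So a nonzero $\Delta_{u,v}$ forces $v-u=r$, $v-u=r+t$, or $v-u>r+t$. Applying this to each jump $j_{i+1}-j_i$, $0\le i<p$, of $j\in J(d)$ yields (\ref{dirt}).

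There is no real obstacle here. The only points needing care are reading the hypothesis correctly as the vanishing of the intermediate coefficients, and the boundary case $r=t$, where $y^2$ lands exactly on the exponent $r+t$.
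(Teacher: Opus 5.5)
Your proof is correct and is essentially the argument the paper leaves implicit; its proof of this lemma consists only of ``This is clear.'' You read each nonzero $\Delta_{u,v}$ as the coefficient of $x^{v-u}$ in $(1+y)^u$, note that $y$ has only the exponents $r$, $r+t$ and exponents above $r+t$, and use $r\geq t$ so that every $y^i$ with $i\geq 2$ starts at degree $2r\geq r+t$; this is exactly the omitted reasoning, including the boundary case $r=t$ and the reading of the hypothesis as the vanishing of the coefficients of $x^{r+2},\dots,x^{r+t}$.
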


\begin{proof} This is clear.
\end{proof}

\begin{prop}\label{sub1} If $r\equiv 0\mod p$ and 
$
f=x+a x^{r+1}+\cdots,
$
then 
$
f^{(p)}=x+a^p x^{pr+1}+\cdots,
$
and therefore for any $m\in\N$, we have
$
f^{(p^m)}=x+a^{p^m} x^{p^m r+1}+\cdots.
$
\end{prop}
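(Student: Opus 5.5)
We work with York's matrix formalism and compute $\Delta^p_{1,d}=\sum_{j\in J(d)}S(j)$ for $d\le pr+1$. Since $r\equiv 0\bmod p$, the remainder is $t=0$, so Theorem \ref{kr} gives only $f^{(p)}\in K_{pr}$. The real task is to identify the coefficient of $x^{pr+1}$, namely $\Delta^p_{1,pr+1}$.

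By Lemma \ref{diryo}, every $j\in J(d)$ has all jumps $\ge r$. Hence $j_p\ge 1+pr$, with equality exactly when every jump equals $r$. So $J(pr+1)$ consists of at most the single sequence $j=(1,r+1,2r+1,\dots,pr+1)$, and
$$
\Delta^p_{1,pr+1}=\Delta_{1,r+1}\Delta_{r+1,2r+1}\cdots\Delta_{(p-1)r+1,pr+1}.
$$
The next step is to evaluate each factor $\Delta_{ir+1,(i+1)r+1}$. This is the coefficient of $x^{ir+1+r}$ in $f^{ir+1}=x^{ir+1}(1+ax^r+\cdots)^{ir+1}$. Only the term $ax^r$ can contribute to degree $r$ inside the parenthesis, since every other term has degree $>r$. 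The coefficient is therefore $(ir+1)a$.

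Because $p\mid r$, we have $ir+1\equiv 1\bmod p$, so each factor equals $a$ and the product is $a^p$. This gives $f^{(p)}=x+a^px^{pr+1}+\cdots$. Lemma \ref{York} is not even needed here; the divisibility $p\mid r$ enters only through $ir+1\equiv1$.

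For the iterated statement, we induct on $m$. Write $g=f^{(p^{m-1})}=x+a^{p^{m-1}}x^{p^{m-1}r+1}+\cdots$ and set $r'=p^{m-1}r$. Then $g\in K_{r'}$ has the required shape, with $r'\equiv 0\bmod p$ and leading coefficient $a^{p^{m-1}}$ in degree $r'+1$. Applying the case already proved gives $f^{(p^m)}=g^{(p)}=x+(a^{p^{m-1}})^px^{pr'+1}+\cdots=x+a^{p^m}x^{p^mr+1}+\cdots$.

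No step looks like a genuine obstacle. The only point needing care is the uniqueness of the path in $J(pr+1)$, which is immediate from the minimal-jump bound. If $S(j)=0$ for that path (for instance when $a^p=0$), the formula still holds, because the sum over $J(pr+1)$ equals the product over $L(pr+1)$ restricted to this path.
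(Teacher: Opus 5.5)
Your proposal is correct and follows essentially the same route as the paper. The only contribution to the coefficient of $x^{pr+1}$ is the path of $p$ jumps of length $r$, whose factors $(ir+1)a$ each reduce to $a$ because $p\mid r$, and the $p^m$ formula follows by repeated application. The paper compresses this into one line; your minimal-jump bound for $d\le pr+1$ and the explicit induction with $r'=p^{m-1}r$ simply spell out what it leaves implicit.
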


\begin{proof} The coefficient of $x^{pr+1}$ in $f^{(p)}$ is $\Delta_{1,r+1}\Delta_{r+1,2r+1}\cdots\Delta_{(p-1)r+1,pr+1}=a^p$.
\end{proof}

For the remainder of the paper we assume that $r\equiv t\mod p$ and $r\geq t$ (though we do not necessarily require that 
$0\leq t<p$).

\begin{prop}\label{sub2} Suppose that $p$ is odd, $r\equiv -1\mod p$,
and 
$$
f=x+a x^{r+1}+bx^{r+t+1}+f_{r+t+2}x^{r+t+2}+f_{r+t+3}x^{r+t+3}+\cdots.
$$
Then
$
f^{(p)}=x-a^{p-1}b x^{pr+t+1}-a^{p-2}b^2 x^{pr+2t+1}+\cdots
$
if $t=p-1$, while if $t=up+(p-1)$ for some $u\geq 1$, then
$$
f^{(p)}=x-a^{p-1}b x^{pr+t+1}-a^{p-1}(f_{r+t+1+p} x^{pr+t+1+p}+\cdots+f_{r+t+1+up} x^{pr+t+1+pu})-a^{p-2}b^2 x^{pr+2t+1}+\cdots
$$
In particular, if $t=p-1$, $f=x+x^{r+1}+x^{r+t+1}+\cdots$, and $m\geq 1$, then
$$
f^{(p^m)}=x+(-1)^m x^{p^m r+p^{m-1}t+\cdots+pt+t+1}+(-1)^{m} x^{p^m r+p^{m-1}t+\cdots+pt+2t+1}+\dots.
$$
\end{prop}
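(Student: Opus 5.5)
We compute $f^{(p)}$ from the first row of $\Delta^p$, using $\Delta^p_{1,d}=\sum_{j\in J(d)}S(j)$, and classify the paths $j\in J(d)$ for $d\le pr+2t+1$. The basic structural input is Lemma \ref{York}. Since $r\equiv -1\mod p$, a jump of size exactly $r$ from a position $j_i$ with $p\nmid j_i$ may land on a multiple of $p$. From a multiple of $p$, however, Lemma \ref{York} forbids landing on a non-multiple of $p$, so every later position is also a multiple of $p$. The endpoint $d$ must then be a multiple of $p$, and since $t\equiv -1\mod p$ the relevant targets $d=pr+t+1+k$ are not multiples of $p$ unless $k\equiv -1\mod p$. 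I will use this to show that almost all paths are killed.

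Step 1 (the lower bound $d\ge pr+t+1$). By Lemma \ref{dirto}, every jump is $r$, $r+t$, or $>r+t$. The path with all jumps equal to $r$ visits $1,r+1,\dots$, and $r+1\equiv 0\mod p$. The second step runs from $r+1$ to $2r+1$, and $2r+1\equiv -1\not\equiv 0$, so $\Delta_{r+1,2r+1}=0$ by Lemma \ref{York}. More generally, any path whose first jump is $r$ reaches the multiple of $p$ at $r+1$ and must thereafter stay on multiples of $p$. Each later jump must then be $\equiv 0\mod p$, hence at least $r+1$, and a jump of size exactly $r+t$ is impossible since $r+t\equiv -2$ (here $p$ odd is used). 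I will check similarly that landing on a multiple of $p$ at any intermediate step forces the total length past $pr+2t$. So the surviving paths avoid multiples of $p$ until the end. Tracking residues, a jump of $r$ decreases the residue by $1$ and a jump of $r+t$ decreases it by $2$. Starting from residue $1$, each jump must avoid residue $0$ until the last step, which forces the pattern.

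Step 2 (the coefficients). For $pr+t+1\le d<pr+2t+1$, the candidates have total excess $d-1-pr<t$ over all-$r$ jumps. So the jumps are $p-1$ jumps of $r$ and one jump of $r+k$ with $0\le k<t$. Writing $\Delta_{u,u+r}=ua$ (the coefficient of $x^{u+r}$ in $f^u$) and $\Delta_{u,u+r+k}=u f_{r+k+1}+(\text{terms involving }a)$, the residue constraint should force the big jump to sit in a specific place, and the sum over placements should collapse via $\sum$ or $\prod$ of residues mod $p$, essentially $(p-1)!=-1$ by Wilson. This gives $-a^{p-1}b$ at $k=t$ and $-a^{p-1}f_{r+t+1+lp}$ at $k=t+lp$. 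When $t=p-1$ the intermediate range contains nothing surviving. At $d=pr+2t+1$ there are two jumps of $r+t$ (or mixed contributions), producing $-a^{p-2}b^2$. This coefficient bookkeeping, including the binomial terms from $f^u$ such as $\binom{u}{2}a^2$ inside $\Delta_{u,u+2r}$ when a jump of $2r$ appears, is the main obstacle. I would first enumerate the paths explicitly for $p=3$ to see the pattern, and then prove the general cancellation with Wilson's theorem.

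Step 3 (iteration). For $t=p-1$ and $a=b=1$, the formula gives $f^{(p)}=x-x^{s+1}-x^{s+t+1}+\cdots$ with $s=pr+t\equiv -1\mod p$ and $s\ge t$. This has the same shape with $a=b=-1$. The new coefficients are $-(-1)^{p-1}(-1)=1$ and $-(-1)^{p-2}(-1)^2=1$. I need to recheck these signs so that the induction on $m$ yields $(-1)^m$; I would verify this carefully with the general formula $(-a^{p-1}b,-a^{p-2}b^2)$ applied to $a=b=\varepsilon$, which gives $(-\varepsilon^{p},-\varepsilon^{p})=(-\varepsilon,-\varepsilon)$ since $p$ is odd. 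This yields the alternation $(-1)^m$ and the stated exponent $p^m r+p^{m-1}t+\cdots+t$.
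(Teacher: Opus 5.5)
Your framework is the paper's (classify $J(d)$ with Lemma \ref{York} and residues mod $p$, evaluate with Wilson's theorem, iterate), and Step 3 is correct. The gap is that the proof stops where the content of the proposition lies, and Step 2 is set up incorrectly. For $pr+t+1\le d<pr+2t+1$ the excess $d-1-pr$ lies in $[t,2t)$, not below $t$. So by Lemma \ref{dirto} the candidates are $p-1$ jumps of length $r$ and one jump of length $r+t+k$ with $0\le k<t$. Your residue count in Step 1 only covers jumps $r$ and $r+t$, so it does not classify these paths. You leave both the classification and the evaluation of $S(j)$ as ``the main obstacle''. Your expectation of a sum over placements that collapses is also off: here there is no cancellation at all. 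Separately, the claim that landing on a multiple of $p$ at an intermediate step pushes the length past $pr+2t$ is false. The path whose second $(r+t)$-jump ends at step $p-1$ sits at $(p-1)r+2t+1\equiv 0$ and still ends at $pr+2t+1$; it dies only because $d\not\equiv 0\pmod p$.

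The missing step is short. A jump $r+t+k$ shifts the residue by $k-2$.
If it is not the first jump, then $j_1=r+1\equiv 0$, a later $r$-jump leaves the multiples of $p$, and $S(j)=0$.
If it is first, then $j_i\equiv k-i$, which vanishes at some step $h\le p-1$ followed by an $r$-jump, unless $p\mid k$.

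So each such $d$ carries at most one path, namely $(1,\,r+t+1+pv,\,2r+t+1+pv,\dots)$ with $0\le pv<t$. Its first factor is exactly $f_{r+t+1+pv}$ (with $f_{r+t+1}=b$). Since $ir+t+1+pv\equiv -i$, Wilson gives $S(j)=-a^{p-1}f_{r+t+1+pv}$.

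At $d=pr+2t+1$, the single jump $r+2t$ dies the same way, since $k=t\not\equiv 0$. With two $(r+t)$-jumps, the first must come first and the second last; otherwise $j_{p-1}\equiv 0$ while $j_p\not\equiv 0$. This gives $S(j)=b\cdot a^{p-2}\prod_{i=1}^{p-2}(-i)\cdot\Delta_{u,\,pr+2t+1}=-a^{p-2}b\,\Delta_{u,\,pr+2t+1}$ with $u=(p-1)r+t+1$. Here $\Delta_{u,\,pr+2t+1}=ub+\binom{u}{2}a^2$, the second term appearing only when $r=t$. Since $u\equiv 1\pmod p$, this equals $b$: the binomial term you flag vanishes because $u\equiv 1$, not by Wilson. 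This yields $-a^{p-2}b^2$, which is what Step 3 needs.
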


\begin{proof} Take any $1<d\leq pr+2t+1$ such that $\Delta^p_{1,d}\neq 0$ and any $j\in J(d)$.
We claim that either $j_1=r+t+1$ or else $t>p$ and $j_1=r+t+1+vp$ for some $1\leq v\leq u$. Three cases arise.

$\bullet$ $j_1=r+1$, which is congruent to 0 modulo $p$. Then $p\mid j_i$ for all $1\leq i\leq p$ by Lemma \ref{York}.
But $r\equiv -1\mod p$ and $r+t\equiv -2\not\equiv 0\mod p$, so $j_{i+1}-j_i\geq r+t+2$ for all $1\leq i<p$ by (\ref{dirt}),
hence $j_p\geq r+1+(p-1)(r+t+2)=pr+(p-1)t+2(p-1)>pr+2t+1$, a contradiction.

$\bullet$ $j_1>r+2t+1$. Then $j_p>pr+2t+1$, which is absurd.

$\bullet$ $r+t+1<j_1\leq r+2t+1$. Then $j_1+(p-2)r+(r+t)>pr+2t+1$, so for $j_p\leq pr+2t+1$ we require
$j_{i+1}-j_i=r$ for all $1\leq i<p$ by (\ref{dirt}). Here $j_1=r+t+1+k$ for some $1\leq k\leq t$.
Let $h$ be remainder of dividing $k$ by $p$. If $h\neq 0$, then $j_h=kr+t+1+h$ is congruent to 0 modulo $p$,
and $j_{h+1}=j_h+r$ is not congruent to 0 modulo~$p$, against Lemma \ref{York}. If $h=0$, then 
$k=pv$ for some $1\leq v\leq u$, so $t>p$.

This proves the claim. It now follows from (\ref{dirt}) that if $j_1=r+t+1$, then $d=pr+t+1$ and
$j$ consists of $p$ jumps of length $r$ or else $d=pr+2t+1$ and $j$ consists of $p-1$ jumps of length $r$
one jump of length $r+t$, while if $j_1=r+t+1+pv$ for some $1\leq v\leq u$, then $t>p$, $d=pr+t+1+pv$,
and  $j$ consists of $p-1$ jumps of length $r$
one jump of length $r+t+pv$. We proceed to compute the value of $\Delta^p_{1,d}$ in each case.

Assume first that $d=pr+t+1$.  Then
$$
\Delta^p_{1,pr+t+1}=\Delta_{1,r+t+1}\Delta_{r+t+1,2r+t+1}\cdots\Delta_{(p-1)r+t+1,pr+t+1}=b(r+t+1)a\cdots((p-1)r+t+1)a.
$$
As $r\not\equiv 0\mod p$, this is equal to $-a^{p-1}b$ by Wilson's theorem. The case when $d=pr+t+1+pv$ is entirly
analogous. Assume finally that $d=pr+2t+1$. Then the single jump of length $r+t$ must occur last if it is to produce a nonzero 
summand $\Delta_{j_{0},j_1}\cdots\Delta_{j_{p-1},j_p}$, for otherwise
$j_{p-1}=(p-1)r+2t+1$ is congruent to 0 modulo $p$, and $j_p\not\equiv 0\mod p$, forcing $\Delta_{j_{p-1},j_p}=0$
by Lemma \ref{York}. Thus
$$
\Delta^p_{1,pr+2t+1}=\Delta_{1,r+t+1}\Delta_{r+t+1,2r+t+1}\cdots\Delta_{(p-2)r+t+1,(p-1)r+t+1}\Delta_{(p-1)r+t+1,pr+2t+1}.
$$
As $r\equiv -1\mod p$, the product of the first $p-1$ factors is equal to
$$b(r+t+1)a\cdots((p-2)r+t+1)a=a^{p-2}b.$$ The last
factor is the coefficient of $x^{pr+2t+1}$ in $f^{(p-1)r+t+1}$. If $r>t$, the only way to produce $x^{pr+2t+1}$
is by selecting $x$ exactly $(p-1)r+t$ times and $x^{r+t+1}$ once, which gives
$$
\Delta_{(p-1)r+t+1,pr+2t+1}=((p-1)r+t+1)b=b,
$$
confirming the stated value of $\Delta^p_{1,pr+2t+1}$. If $r=t$, there is a second way to produce $x^{pr+2t+1}$, namely
by selecting $x$ exactly $(p-1)r+t-1$ times and $x^{r+1}$ twice, which gives
$$
\Delta_{(p-1)r+t+1,pr+2t+1}=((p-1)r+t+1)b+{\binom{(p-1)r+t+1}{2}}a^2=b,
$$
since $(p-1)r+t+1\equiv 1\mod p$. This gives the desired value of $\Delta^p_{1,pr+2t+1}$ in this case as well.

The last formula $(x+x^{r+1}+x^{r+t+1}+\cdots)^{(p^m)}$ now follows by repeated application of the previous one,
applied with $t=p-1$ and $a=b=1$.
\end{proof}

\begin{prop}\label{sub3} Suppose that $f=x+a x^{r+1}+b x^{r+t+1}+\cdots$ as well as $r\not\equiv 0,-1\mod p$.
Then
$
f^{(p)}=x+A x^{pr+t+1}+\cdots,
$
where
$$
A=\begin{cases} -a^{p-1}b & \text{ if }r>t,\\ -a^{p-1}b+\frac{r+1}{2}a^{p+1}& \text{ if }r=t.\end{cases}
$$
\end{prop}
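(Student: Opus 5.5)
The plan is to read $f^{(p)}$ off the first row of $\Delta^p=\sum_{j\in J(d)}S(j)$, exactly as in Proposition \ref{sub2}. We show that $\Delta^p_{1,d}=0$ for $1<d\le pr+t$ and compute $\Delta^p_{1,pr+t+1}$. First note that $p$ is odd: when $p=2$ every $r$ is $\equiv 0$ or $-1$. Also $r\equiv t\not\equiv 0\mod p$, so $t\ge 1$.

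\emph{Step 1: possible paths.} By Lemma \ref{dirto} every jump is $r$, $r+t$, or more than $r+t$. If $j_p=d\le pr+t+1$, the jumps sum to at most $pr+t$. Hence either all jumps equal $r$ and $d=pr+1$, or exactly one jump equals $r+t$, the rest equal $r$, and $d=pr+t+1$. Every other $d$ with $1<d\le pr+t+1$ has $J(d)=\emptyset$.

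The entries along such paths are as follows. A jump of length $r$ starting at $u$ contributes $\Delta_{u,u+r}=ua$. A jump of length $r+t$ starting at $u$ contributes $\Delta_{u,u+r+t}=ub$ if $r>t$. If $r=t$ there is a second contribution, as in the proof of Proposition \ref{sub2}, giving $ub+\binom{u}{2}a^2$.

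\emph{Step 2: vanishing at $d=pr+1$.} Here $S(j)=\prod_{i=0}^{p-1}(ir+1)a$. Since $r$ is invertible mod $p$, there is a unique $c\in\{0,\dots,p-1\}$ with $cr+1\equiv 0\mod p$, and $c\neq 0$. So this product vanishes and $f^{(p)}\in K_{pr+t}$.

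\emph{Step 3: the coefficient at $d=pr+t+1$.} Let $k\in\{0,\dots,p-1\}$ be the position of the long jump. The path visits $1,r+1,\dots,kr+1$, then jumps to $(k+1)r+t+1$, then continues with jumps of length $r$. Using $t\equiv r$, the node $ir+t+1$ is congruent to $(i+1)r+1$. So the term for $k$ is
$$a^{p-1}\prod_{i=0}^{k-1}(ir+1)\cdot\Delta_{kr+1,(k+1)r+t+1}\cdot\prod_{i=k+1}^{p-1}((i+1)r+1).$$
The first product vanishes if $k>c$ and the last vanishes if $c\ge k+2$, so only $k=c$ and $k=c-1$ can survive.

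For $k=c$ the long jump starts at $cr+1\equiv 0\mod p$ and ends at $(c+1)r+t+1\equiv 2r\not\equiv 0\mod p$, since $p$ is odd. Lemma \ref{York} then kills this term.

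For $k=c-1$, the two products together run over the residues $ir+1$ for all $i\neq c,c-1$ (the value $i+1=p$ gives $1$). By Wilson's theorem the product over all $i\neq c$ is $-1$, and the missing factor is $(c-1)r+1\equiv -r$. Hence the two products together equal $1/r$. The long jump starts at $u=(c-1)r+1\equiv -r$, so $\Delta_{u,u+r+t}=-rb$ if $r>t$. This gives $A=-a^{p-1}b$.

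If $r=t$, the entry is instead
$$-rb+\binom{-r}{2}a^2=-rb+\tfrac{r(r+1)}{2}a^2,$$
which yields $A=-a^{p-1}b+\tfrac{r+1}{2}a^{p+1}$.

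\emph{Main obstacle.} The delicate step is the bookkeeping in Step 3. One must identify exactly which position $k$ of the long jump survives, and correctly evaluate the product of the remaining factors via Wilson's theorem. In the case $r=t$ one must also check that the extra $a^2$ contribution in $\Delta_{u,u+2r}$ is the only additional term.
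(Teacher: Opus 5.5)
Your proof is correct and takes essentially the same approach as the paper. Both arguments restrict the paths via Lemma \ref{dirto}, use the unique $c$ with $cr+1\equiv 0 \pmod p$ to kill the all-$r$ path and every position of the long jump except one, and evaluate the surviving term by Wilson's theorem; your direct vanishing of the factor $j_i a$ when $p\mid j_i$, together with Lemma \ref{York} for the case $k=c$, does the job of the paper's appeals to Lemma \ref{York}, and your explicit remarks that $p$ is odd and $t\ge 1$ make precise hypotheses the paper uses tacitly.
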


\begin{proof} We claim that if $\Delta_{1,d}^p\neq 0$ for some $d>1$,  then $d\geq pr+t+1$.
Indeed, let $j\in J(d)$. If $j_1\geq r+t+1$, then $j_p\geq pr+t+1$ by Lemma \ref{diryo}.
If $j_1<r+t+1$, then necessarily $j_1=r+1$. If $j$ contains at least one jump of length $>r$,
then Lemma \ref{dirto} implies $j_p\geq pr+t+1$. If $j$ consists of $p$ jumps of length $r$,
then one of the numbers $r+1,\dots,(p-1)r+1$ must be congruent to 0 modulo $p$, while $j_p=pr+1$
is not, against Lemma \ref{York}. 

This proves the claim, so $f^{(p)}\in K_{pr+t}$, and we proceed to determine $A$.
Set $d=pr+t+1$ and suppose $j\in J(d)$. By (\ref{dirt}), of the $p$ differences $j_{i+1}-j_i$, $0\leq i<p$,
one is equal to $r+t$ and the remaining $p-1$ are equal to $r$. As $r\not\equiv 0,-1\mod p$, there is one and
only one~$k$ such that $1<k\leq p-1$ and $kr+1\equiv 0\mod p$. Suppose, if possible, that $j_{i+1}-j_i=r+t$
for some $0\leq i<k-1$. Then $j_k=(k-1)r+t+1$, which is congruent to 0 modulo $p$, and since $j_{k+1}=j_k+r$,
we have $\Delta_{j_{k-1},j_k}=0$ by Lemma \ref{York}, a contradiction. Then
$$
j_1=r+1,\dots, j_{k-1}=(k-1)r+1.
$$
The case $j_k=kr+1$ is impossible by Lemma \ref{York}, as $pr+t+1\not\equiv 0\mod p$. Thus 
$$
j_k=kr+t+1,\dots, j_{p}=pr+t+1,
$$
and therefore
$$
A=\Delta_{1,r+1}\cdots\Delta_{(k-1)r+1,kr+t+1}\cdots\Delta_{(p-1)r+t+1,pr+t+1}.
$$
Here
$$
\Delta_{j_i,j_{i+1}}=\begin{cases} (ir+1)a & \text{ if }i<k-1,\\ ((i+1)r+1)a & \text{ if }i>k-1.\end{cases}
$$
On the other hand,
$$
\Delta_{j_{k-1},j_{k}}=\begin{cases} ((k-1)r+1)b & \text{ if }r>t,\\ ((k-1)r+1)b+{\binom{(k-1)r+1}{2}}a^2  & \text{ if }r=t.\end{cases}
$$
The stated value of $A$ now follows from Wilson's theorem.
\end{proof}

\begin{prop}\label{sub5} Suppose that $p=2$, $r\equiv 1\mod 2$, and that
$
f=x+a x^{r+1}+b x^{r+2}\cdots.
$ 
Then 
$
f^{(2)}=x+ A x^{2r+2}+B x^{2r+3}+\cdots,
$
where
$$
A=\begin{cases} ab & \text{ if }r>1,\\ a(a^2+b) & \text{ if }r=1,
\end{cases}\; 
B=\begin{cases} b^2 & \text{ if }r>1,\\ b(a^2+b) & \text{ if }r=1.
\end{cases}
$$
\end{prop}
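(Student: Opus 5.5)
The plan is to read off $f^{(2)}$ from the first row of $\Delta^2$, using
$$
\Delta^2_{1,d}=\sum_{1<j_1<d}\Delta_{1,j_1}\Delta_{j_1,d},\qquad \Delta_{1,j_1}=f_{j_1},
$$
for every $d\leq 2r+3$. Here $p=2$ and $r$ is odd, so the standing assumption $r\equiv t\mod p$ gives $t=1$, and $f=x+ax^{r+1}+bx^{r+t+1}+\cdots$. Lemma \ref{diryo} forces $j_1\geq r+1$ and $d-j_1\geq r$. Lemma \ref{York} kills every term with $j_1$ even and $d$ odd, since $r+1$ is even.

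First I show $f^{(2)}\in K_{2r+1}$. If $d\leq 2r+1$, the only possibility is $j_1=r+1$ and $d=2r+1$. Then $j_1$ is even and $d$ is odd, so the term vanishes by Lemma \ref{York}. This matches Theorem \ref{kr} with $t=1$.

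For $d=2r+2$ there are two candidates for $j_1$.
\begin{itemize}
\item $j_1=r+2$ (jump $r$). This term is $b\cdot\Delta_{r+2,2r+2}$. In $f^{r+2}$, the monomial $x^{2r+2}$ arises only by choosing $ax^{r+1}$ once, so $\Delta_{r+2,2r+2}=(r+2)a=a$, because $r+2$ is odd. The contribution is $ab$.
\item $j_1=r+1$ (jump $r+1$). This term is $a\cdot\Delta_{r+1,2r+2}$. Since $r+1$ is even, I will write $f^{r+1}=(f^2)^{(r+1)/2}$ with
$$
f^2=x^2+a^2x^{2r+2}+b^2x^{2r+4}+\cdots=x^2\bigl(1+a^2x^{2r}+\cdots\bigr).
$$
Hence $f^{r+1}=x^{r+1}(1+a^2x^{2r}+\cdots)^{(r+1)/2}$. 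The coefficient of $x^{2r+2}$ needs an extra $x^{r+1}$, which is possible only if $2r\leq r+1$, i.e.\ $r=1$. In that case the coefficient is $\tfrac{r+1}{2}a^2=a^2$, contributing $a^3$.
\end{itemize}
Together these give $A=ab$ for $r>1$ and $A=ab+a^3=a(a^2+b)$ for $r=1$.

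For $d=2r+3$ there are three candidates for $j_1$.
\begin{itemize}
\item $j_1=r+1$. This is even while $d$ is odd, so the term is $0$ by Lemma \ref{York}.
\item $j_1=r+3$. This gives $f_{r+3}\Delta_{r+3,2r+3}$, a jump of $r$, so $\Delta_{r+3,2r+3}=(r+3)a=0$ because $r+3$ is even.
\item $j_1=r+2$. This gives $b\cdot\Delta_{r+2,2r+3}$, the coefficient of $x^{r+1}$ in $(1+ax^r+bx^{r+1}+\cdots)^{r+2}$. It equals $(r+2)b=b$, plus, when $r=1$, the extra term $\binom{3}{2}a^2=a^2$ from choosing $ax$ twice. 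The contribution is $b^2$, respectively $b(a^2+b)$.
\end{itemize}
This gives the stated $B$.

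The only delicate step is the $r=1$ boundary effects. There the extra selections (two copies of $ax^{r+1}$, or the term $a^2x^{2r+2}$ in $f^2$) reach the target degree, and I must check that the parities of the binomial coefficients come out as claimed. Writing $f^{r+1}$ through the Frobenius identity $f^2=x^2+a^2x^{2r+2}+\cdots$ is the cleanest way to handle the $j_1=r+1$ term. Everything else is a direct count of selections in the style of Proposition \ref{sub3}.
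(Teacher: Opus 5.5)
Your proposal is correct and follows the paper's route: both read off the coefficients of $x^{2r+2}$ and $x^{2r+3}$ in $f^{(2)}$ from $\Delta^2_{1,d}=\sum_{j_1}\Delta_{1,j_1}\Delta_{j_1,d}$, picking up the extra $r=1$ contributions $a^3$ (from $j_1=r+1$) and $a^2b$ (from $\binom{3}{2}a^2$ in $\Delta_{r+2,2r+3}$). You additionally spell out what the paper leaves implicit: the vanishing of the other candidate terms via Lemmas \ref{diryo} and \ref{York}, and the fact that $f^{(2)}\in K_{2r+1}$, which the paper takes from Theorem \ref{kr}. One small correction: $t=1$ comes from the given shape of $f$, not from $r\equiv t \mod 2$ alone.
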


\begin{proof} The coefficient of $x^{2r+2}$ in $f^{(2)}$ is 
$\Delta_{1,r+2}\Delta_{r+2,2r+2}=ab$ if $r>1$ and $\Delta_{1,r+2}\Delta_{r+2,2r+2}+\Delta_{1,r+1}\Delta_{r+1,2r+2}=ab+a^3$
if $r=1$. Moreover, the coefficient of $x^{2r+3}$ in $f^{(2)}$ is 
$\Delta_{1,r+2}\Delta_{r+2,2r+3}$, which equal $b^2$ if $r>1$ and $b^2+a^2b$
if $r=1$.
\end{proof}

\section{The exponent of $H_r$ when $r\equiv 0,-1\mod p$}\label{s4}

\begin{theorem}\label{casor=0} Given any $m,n\in\N$ such that $r\equiv 0\mod p$ and $p^{m-1}r< n\leq p^m r$,
the exponent of $H_r$ is exactly $p^m$.
\end{theorem}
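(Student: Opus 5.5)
The argument has two halves: an upper bound from Theorem \ref{kr} and a lower bound from an explicit witness built with Proposition \ref{sub1}.

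\textbf{Upper bound.} Since $r\equiv 0\mod p$, the remainder $t$ of dividing $r$ by $p$ is $0$. Then the bound in Theorem \ref{kr} reads $p^m r+p^{m-1}t+\cdots+t=p^m r$. The hypothesis $n\leq p^m r$ therefore gives that $H_r$ has exponent at most $p^m$ in $G_n$. Equivalently, iterating $K_r^p\subseteq K_{pr}$ (Theorem \ref{je}) shows that $f^{(p^m)}\in K_{p^m r}\subseteq K_n$ for every $f\in K_r$.

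\textbf{Lower bound.} We need an element of $H_r$ whose order is exactly $p^m$. This requires $r<n$, which holds because $p^{m-1}r<n$ and $m\geq 1$. Take $f=x+x^{r+1}\in K_r$; its image $\pi_n(f)$ is a nontrivial element of $H_r$. Apply Proposition \ref{sub1} with $a=1$ and $m-1$ in place of $m$:
$$
f^{(p^{m-1})}=x+x^{p^{m-1}r+1}+\cdots .
$$
Since $p^{m-1}r+1\leq n$, this element does not lie in $K_n$, so $\pi_n(f)^{p^{m-1}}\neq 1$. By the upper bound, $\pi_n(f)^{p^m}=1$. Because $H_r$ is a $p$-group, the order of $\pi_n(f)$ is exactly $p^m$. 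More generally, any $f=x+ax^{r+1}+\cdots$ with $a^{p^{m-1}}\neq 0$ works, for instance $a$ a unit. Taking $a=1$ avoids any issue with nilpotents in $R$.

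\textbf{Remaining check.} Proposition \ref{sub1} states its iterated formula for $m\in\N$; we need it for exponent $m-1$, which may be $0$. When $m=1$ the formula is trivial, since $f^{(1)}=f$ and its leading term is visibly $x^{r+1}$. The formula iterates because $p^k r\equiv 0\mod p$, so each iterate again satisfies the hypothesis of Proposition \ref{sub1} with $r$ replaced by $p^k r$, as in that proposition's own proof. With both bounds in hand, no step here is a real obstacle.
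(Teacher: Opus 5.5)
Your proof is correct and follows the paper's argument: the upper bound comes from Theorem \ref{kr} with $t=0$, and the lower bound comes from Proposition \ref{sub1} with $a=1$, which shows that $\pi_n(x+x^{r+1})$ has order exactly $p^m$. Your explicit handling of the case $m=1$ and of the iteration in Proposition \ref{sub1} just fills in what the paper calls ``immediate.''
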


\begin{proof} Immediate consequence of Theorem \ref{kr} and Proposition \ref{sub1} (applied with $a=1$).
\end{proof}

\begin{theorem}\label{casor=-1} Suppose $p$ is odd. Given any $m,n\in\N$ such that $r\equiv -1\mod p$ and 
$$p^{m-1}(r+1)-1< n\leq p^{m}(r+1)-1,$$
the exponent of $H_r$ is exactly $p^m$.
\end{theorem}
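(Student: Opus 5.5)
Since $p$ is odd, $r\equiv -1\mod p$, and $r>0$, the remainder of dividing $r$ by $p$ is $t=p-1$. So $r\equiv t\mod p$ and $r\geq t$, and the standing hypotheses of Proposition \ref{sub2} hold with $t=p-1$. The plan is to match the interval in the statement with the one in Theorem \ref{kr}, and then use Proposition \ref{sub2} for the lower bound.

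First, the arithmetic. With $t=p-1$ we get
$$p^m r+p^{m-1}t+\cdots+pt+t=p^m r+(p-1)(p^{m-1}+\cdots+1)=p^m r+p^m-1=p^m(r+1)-1.$$
Likewise $p^{m-1}r+p^{m-2}t+\cdots+t=p^{m-1}(r+1)-1$. The hypothesis therefore reads
$$p^{m-1}r+\cdots+pt+t<n\leq p^m r+p^{m-1}t+\cdots+pt+t.$$

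Upper bound: by Theorem \ref{kr}, $n\leq p^m r+p^{m-1}t+\cdots+t$ implies that every element of $H_r$ has order dividing $p^m$.

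Lower bound: take $f=x+x^{r+1}+x^{r+t+1}\in K_r$, with $t=p-1$. We need $f^{(p^{m-1})}\notin K_n$, so that $\pi_n(f)$ has order at least $p^m$. For $m=1$ this is immediate, since $n>r$ and the coefficient of $x^{r+1}$ in $f$ is $1\neq 0$. For $m\geq 2$, apply the last formula of Proposition \ref{sub2} with $m-1$ in place of $m$:
$$f^{(p^{m-1})}=x+(-1)^{m-1}x^{N+1}+\cdots,\qquad N=p^{m-1}r+p^{m-2}t+\cdots+t=p^{m-1}(r+1)-1.$$
Since $N<n$, the monomial $x^{N+1}$ survives in $G_n$ with coefficient $\pm1\neq 0$, because $1\neq 0$ in $R$. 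Hence $\pi_n(f)^{(p^{m-1})}\neq 1$, and the exponent of $H_r$ is exactly $p^m$.

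Main obstacle: checking that Proposition \ref{sub2} applies verbatim. It is stated for $f$ of the given shape with the hypotheses $r\equiv t\mod p$ and $r\geq t$. The iterated formula is stated there only for $t=p-1$ and $a=b=1$, which is exactly our case. Since it is already established there by induction (each $p$th power again has the shape $x\pm x^{s+1}\pm x^{s+t+1}+\cdots$ with $s\equiv -1\mod p$), no further work is needed beyond the index computation above.
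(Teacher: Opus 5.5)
Your proposal is correct and is essentially the paper's own argument. The upper bound comes from Theorem \ref{kr}, since $t=p-1$ turns $p^m r+p^{m-1}t+\cdots+t$ into $p^m(r+1)-1$. The lower bound comes from the iterated formula at the end of Proposition \ref{sub2} applied to $x+x^{r+1}+x^{r+t+1}$. The paper states this in a single line, and your write-up simply makes the index arithmetic and the $m=1$ case explicit.
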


\begin{proof} This follows from Theorem \ref{kr} and Proposition \ref{sub2}.
\end{proof}

\begin{theorem}\label{caso22} Suppose $p=2$. Given any $m,n\in\N$ such that $r\equiv 1\mod 2$, $r>1$, and 
$$2^{m-1}(r+1)-1< n\leq 2^{m}(r+1)-1,$$
the exponent of $H_r$ is exactly $2^m$.
\end{theorem}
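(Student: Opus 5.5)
The plan mirrors the proofs of Theorems \ref{casor=0} and \ref{casor=-1}. The upper bound comes from Theorem \ref{kr}, and the lower bound comes from iterating Proposition \ref{sub5} on a well-chosen element.

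For the upper bound, take $p=2$ and $r$ odd, so the remainder is $t=1$. The exponent $2^m r+2^{m-1}+\cdots+2+1$ in Theorem \ref{kr} equals $2^m r+2^m-1=2^m(r+1)-1$. Hence $n\leq 2^m(r+1)-1$ gives that every element of $H_r$ has order dividing $2^m$.

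For the lower bound, consider $f=x+x^{r+1}+x^{r+2}\in K_r$. Proposition \ref{sub5} with $r>1$ and $a=b=1$ gives
\[
f^{(2)}=x+x^{2r+2}+x^{2r+3}+\cdots.
\]
Set $r_1=2r+1$. Then $r_1$ is odd, $r_1>1$, and
\[
f^{(2)}=x+x^{r_1+1}+x^{r_1+2}+\cdots\in K_{r_1}
\]
has the same shape as $f$. So Proposition \ref{sub5} applies again. By induction, with $r_k=2r_{k-1}+1$, we get $r_k+1=2^k(r+1)$, that is, $r_k=2^k(r+1)-1$, and
\[
f^{(2^k)}=x+x^{2^k(r+1)}+x^{2^k(r+1)+1}+\cdots.
\]
Take $k=m-1$. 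The coefficient of $x^{2^{m-1}(r+1)}$ in $f^{(2^{m-1})}$ is $1\neq 0$. Since $2^{m-1}(r+1)\leq n$, the image of $f^{(2^{m-1})}$ in $G_n$ is nontrivial. As $\pi_n(f)\in H_r$, the order of $\pi_n(f)$ is exactly $2^m$. When $m=1$ this step is just the observation that $\pi_n(f)\neq 1$ because $r+1\leq n$.

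There is no real obstacle. The only points to check are that Proposition \ref{sub5} requires only the two leading coefficients $a,b$ and no constraint on higher terms, and that the hypothesis $r>1$ persists: each $r_k\geq 2r+1>1$ is odd. These hold, so the argument is immediate.
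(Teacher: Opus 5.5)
Your proof is correct and is the paper's argument: the upper bound comes from Theorem \ref{kr} with $t=1$, which gives $2^m(r+1)-1$, and the lower bound comes from iterating Proposition \ref{sub5} starting with $a=b=1$, which keeps the shape $x+x^{r_k+1}+x^{r_k+2}+\cdots$ with $r_k=2^k(r+1)-1$ odd and greater than $1$.
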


\begin{proof} Use Theorem \ref{kr} and Proposition \ref{sub5} (applied repeatedly, starting with $a=b=1$).
\end{proof}

We next compute the exponent of $K_1=G_n$ itself when $p=2$. 

\begin{theorem}\label{caso222} Suppose $p=2$. Given any $m,n\in\N$ such that $n<2^{m+1}$, then 
the exponent of $G_n$ is at most $2^m$, and it is equal to $2^m$ if $2^m<n<2^{m+1}$ or $2^m=n=4$.
\end{theorem}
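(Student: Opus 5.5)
The upper bound comes directly from Theorem \ref{kr}. For the lower bound I will exhibit explicit elements: I will iterate Proposition \ref{sub5}, starting at $r=1$ and continuing with odd $r>1$.

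\emph{Upper bound.} Take $p=2$ and $r=1$ in Theorem \ref{kr}. Then the remainder is $t=1$, and the bound reads
$$2^m\cdot 1+2^{m-1}+\cdots+2+1=2^{m+1}-1.$$
Hence for every $n<2^{m+1}$, i.e. $n\leq 2^{m+1}-1$, the exponent of $G_n=H_1$ is at most $2^m$.

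\emph{Lower bound when $2^m<n<2^{m+1}$.} If $m=0$ this range is empty. If $m=1$, then $n=3$, and any nontrivial element such as $\pi_3(x+x^2)$ has order $2$. So assume $m\geq 2$ and take $f=x+x^3$, that is, $a=0$ and $b=1$ in Proposition \ref{sub5} with $r=1$. That proposition gives
$$f^{(2)}=x+A_1x^4+B_1x^5+\cdots,\qquad A_1=a(a^2+b)=0,\quad B_1=b(a^2+b)=1.$$
Now $f^{(2)}$ has the shape $x+A_1x^{r'+1}+B_1x^{r'+2}+\cdots$ with $r'=3$, which is odd and $>1$. Proposition \ref{sub5} therefore applies again, now in its $r>1$ case, giving $A_{k+1}=A_kB_k$ and $B_{k+1}=B_k^2$. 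The exponent also updates as $r\mapsto 2r+1$, so after $k$ steps it equals $2^{k+1}-1$.

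By induction on $k$,
$$f^{(2^k)}=x+A_kx^{2^{k+1}}+B_kx^{2^{k+1}+1}+\cdots,\qquad B_k=1.$$
Taking $k=m-1$, the coefficient of $x^{2^m+1}$ in $f^{(2^{m-1})}$ is $1\neq0$. Since $n\geq 2^m+1$, the image of $f^{(2^{m-1})}$ in $G_n$ is nontrivial. Thus $\pi_n(f)$ has order exactly $2^m$, and the exponent of $G_n$ equals $2^m$.

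\emph{The case $2^m=n=4$.} Take $f=x+x^2$, that is, $a=1$ and $b=0$ in Proposition \ref{sub5}. Then $A=a(a^2+b)=1$, so $f^{(2)}=x+x^4+\cdots$. A direct check confirms this: $f^{(2)}=x+2x^2+2x^3+x^4=x+x^4$ in characteristic $2$. Hence $\pi_4(f)$ has order $4$, and the exponent of $G_4$ is $4$.

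The only delicate point is the bookkeeping in the lower bound. Each application of Proposition \ref{sub5} must be fed an element of exactly the required shape, with consecutive nonzero-capable coefficients at $x^{r+1}$ and $x^{r+2}$, and the surviving coefficient must be one that is nonzero in an arbitrary ring. This is why $b=1$ is chosen, making $B_k=1$. The coefficient $A_k$ vanishes in the chain started from $x+x^3$, so it cannot witness $n=2^m$ in general; that boundary case is available here only for $m=2$, via $x+x^2$.
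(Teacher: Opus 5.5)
Your proof is correct and follows the paper: the upper bound comes from Theorem~\ref{kr} with $r=t=1$, the witness for $2^m<n<2^{m+1}$ is $x+x^3$, and $x+x^2$ handles $G_4$. The only difference is bookkeeping. The paper treats $x+x^3$ as an element of $K_2$ and applies Proposition~\ref{sub1} with $r=2$, $a=1$, getting $(x+x^3)^{(2^{m-1})}=x+x^{2^m+1}+\cdots$ in one step. Your iteration of Proposition~\ref{sub5} with $a=0$, $b=1$ reduces to exactly that computation.
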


\begin{proof} The first statement is a special case of Theorem \ref{kr}. If $2^m<n<2^{m+1}$, then
Theorem~\ref{kr} and Proposition \ref{sub1} applied with $a=1$ show that the exponent of $G_n$ is $2^m$.
By Theorem~\ref{kr}, the exponent of $G_4$ is at most 4, but as $x+x^2$ has order 4, equality occurs. 
\end{proof}

It remains to determine the exponent of $G_{2^m}$ when $p=2$ and $m>2$. 

\begin{prop}\label{pro2} Suppose $p=2$ and $m>2$. Then the exponent of $G_{2^m}$ is at most $2^m$
and at least $2^{m-1}$. More precisely, the exponent $G_{2^m}$ is equal to $2^{m-1}$ if
$ab^{2^{m-2}-1}(a^2+b)^{2^{m-2}}=0$ for all $a,b\in R$, and equal to $2^m$ otherwise.
\end{prop}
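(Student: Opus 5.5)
Upper and lower bounds come directly from earlier results. Since $2^m<2^{m+1}$, Theorem \ref{caso222} gives that the exponent of $G_{2^m}$ is at most $2^m$. For the lower bound, we use the canonical projection $G_{2^m}\to G_{2^m-1}$. Since $2^{m-1}<2^m-1<2^m$ (as $m>2$), Theorem \ref{caso222} shows that $G_{2^m-1}$ has exponent $2^{m-1}$. Hence $G_{2^m}$ has exponent at least $2^{m-1}$. As the exponent is a power of $2$, it is either $2^{m-1}$ or $2^m$. It equals $2^m$ precisely when some $f\in G$ satisfies $f^{(2^{m-1})}\notin K_{2^m}$, i.e. when the coefficient of $x^{2^m}$ in $f^{(2^{m-1})}$ is nonzero for some $f$.

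To compute that coefficient, write $f=x+ax^2+bx^3+\cdots$. By Proposition \ref{sub5} with $r=1$,
$$f^{(2)}=x+a(a^2+b)x^4+b(a^2+b)x^5+\cdots\in K_3.$$
Set $c=a^2+b$, $a_1=ac$ and $b_1=bc$. Then $f^{(2)}$ has the shape $x+a_1x^{r_1+1}+b_1x^{r_1+2}+\cdots$ with $r_1=3$, which is odd and greater than $1$. We now apply Proposition \ref{sub5} in its $r>1$ form repeatedly. The pair $(r,a,b)$ evolves as
$$(r,a,b)\mapsto(2r+1,\,ab,\,b^2).$$
Starting from $r_1=3$, after $k$ further squarings we get $r_{k+1}=2^{k+2}-1$. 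For $f^{(2^{m-1})}$ we need $k=m-2$, which gives $r=2^m-1$. So the leading coefficient $a_{m-1}$ is exactly the coefficient of $x^{2^m}$ in $f^{(2^{m-1})}$. The recursion $b_{i+1}=b_i^2$ gives $b_i=b_1^{2^{i-1}}$. Then $a_{i+1}=a_ib_i$ gives
$$a_{m-1}=a_1\,b_1^{1+2+\cdots+2^{m-3}}=a_1b_1^{2^{m-2}-1}.$$
Substituting $a_1=ac$ and $b_1=bc$,
$$a_{m-1}=ac\,(bc)^{2^{m-2}-1}=ab^{2^{m-2}-1}c^{2^{m-2}}=ab^{2^{m-2}-1}(a^2+b)^{2^{m-2}},$$
which is the stated expression.

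Conversely, every element of $G$ has the form $x+ax^2+bx^3+\cdots$ for some $a,b\in R$, and the coefficient of $x^{2^m}$ in its $2^{m-1}$st iterate depends only on $a$ and $b$ through the formula above. Combined with Theorem \ref{kr}, which gives $f^{(2^{m-1})}\in K_{2^m-1}$ for all $f$, the dichotomy follows. If the expression vanishes for all $a,b$, then $f^{(2^{m-1})}\in K_{2^m}$ for all $f$, and the exponent is $2^{m-1}$. Otherwise some $f=x+ax^2+bx^3$ has $f^{(2^{m-1})}\notin K_{2^m}$, and the exponent is $2^m$.

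The main point needing care is bookkeeping in Proposition \ref{sub5}. Its conclusion lists only the coefficients of $x^{2r+2}$ and $x^{2r+3}$, and the proof uses only the two leading coefficients of $f$. We must check that these two coefficients of the iterate really are the next $a,b$ at each stage, and that no other terms sneak in. This is guaranteed because Proposition \ref{sub5} asserts $f^{(2)}=x+Ax^{2r+2}+Bx^{2r+3}+\cdots$ with nothing below $x^{2r+2}$. We also need that $r_i$ stays odd and greater than $1$ after the first step, which holds since $r_i=2^{i+1}-1$.
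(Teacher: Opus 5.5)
Your proof is correct and follows the paper's argument. The bounds come from Theorem \ref{kr} and Proposition \ref{sub1}, which you reach via Theorem \ref{caso222} and the projection $G_{2^m}\to G_{2^m-1}$, and the dichotomy comes from iterating Proposition \ref{sub5} starting at $r=1$ — you simply spell out the recursion $(r,a,b)\mapsto(2r+1,ab,b^2)$ that the paper leaves implicit in (\ref{b}).
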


\begin{proof} The first statement follows from Theorem \ref{kr} and Proposition \ref{sub1} (applied with $a=1$).
As for the second statement, a repeated application of Proposition \ref{sub5}, starting with $r=1$, yields
\begin{equation}
\label{b}
f^{2^{m-1}}=x+a b^{2^{m-2}-1}(a^2+b)^{2^{m-2}}x^{2^{m}}+b^{2^{m-2}} (a^2+b)^{2^{m-2}}x^{2^{m}+1}+\cdots.
\end{equation}
Now the first statement gives the desired result.
\end{proof}

%\begin{cor}\label{c2} If $p=2$, $m>2$, and $R$ is a Boolean ring, then the exponent
%of $G_{2^m}(R)$ is $2^{m-1}$.
%\end{cor}

\begin{theorem}\label{caso2} Suppose $p=2$. If $R$ is a Boolean ring
the exponent of $G_{2^m}$ is $2^{m-1}$ for every $m>2$.
If $R$ is not a Boolean ring, the exponent of $G_{2^m}$ is $2^{m}$ for every $m>2$.
\end{theorem}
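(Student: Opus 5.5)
By Proposition \ref{pro2}, the exponent of $G_{2^m}$ is $2^{m-1}$ or $2^m$. It equals $2^{m-1}$ exactly when
$$
a\,b^{2^{m-2}-1}(a^2+b)^{2^{m-2}}=0\quad\text{for all }a,b\in R.
$$
The plan is therefore to decide, for Boolean and non-Boolean $R$ separately, whether this polynomial identity holds. Throughout, set $q=2^{m-2}\geq 2$.

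Suppose first that $R$ is Boolean, so $c^2=c$ for all $c\in R$ and $2=0$. Then $a^2+b=a+b$, and every power $c^k$ with $k\geq 1$ equals $c$. Hence for $q\geq 2$ we get $b^{q-1}=b$ and $(a+b)^q=a+b$. The expression becomes
$$
ab(a+b)=a^2b+ab^2=ab+ab=0.
$$
So the identity holds, and the exponent is $2^{m-1}$.

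Suppose now that $R$ is not Boolean. We must exhibit $a,b\in R$ with $ab^{q-1}(a^2+b)^q\neq 0$. The natural first try is $a=1$, which gives $b^{q-1}(1+b)^q$. We need some $b$ making this nonzero. Since $R$ is not Boolean, there is $c$ with $c^2\neq c$, that is, $c(1+c)\neq 0$.

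The difficulty is nilpotents: $b^{q-1}(1+b)^q$ may vanish for every $b$ even though $R$ is not Boolean. A cleaner route is to take $b=1$ and let $a$ vary, which gives $a(a^2+1)^q=a(a+1)^{2q}$. This can still fail, for instance if $a$ is nilpotent of small index.

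So I will split into two cases according to whether $R$ has nonzero nilpotents.

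\emph{Case 1: $R$ has a nonzero nilpotent $z$.} Let $z$ satisfy $z\neq 0$ and $z^2=0$; such an element exists by taking a suitable power of any nonzero nilpotent. Take $a=z$ and $b=1$. Then $a^2+b=1$, and the expression equals $z\cdot 1\cdot 1=z\neq 0$.

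\emph{Case 2: $R$ is reduced.} Pick $c$ with $c^2\neq c$, and take $a=1$, $b=c$. The expression is $c^{q-1}(1+c)^q$. In a reduced ring, $xy=0$ is equivalent to $x^ky^l=0$ for $k,l\geq 1$: indeed $(xy)^{\max(k,l)}$ is a multiple of $x^ky^l$, and if $x^ky^l=0$ then $(xy)^N=0$ for large $N$, forcing $xy=0$. Since $c(1+c)\neq 0$, we conclude $c^{q-1}(1+c)^q\neq 0$, using $q-1\geq 1$.

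The only real obstacle is this split between nilpotent and reduced rings. Once it is made, both verifications are the short computations above. Proposition \ref{pro2} then gives exponent $2^m$ for non-Boolean $R$, completing the proof.
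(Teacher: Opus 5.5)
Your proof is correct. The Boolean half matches the paper, which simply cites Proposition~\ref{pro2}; you additionally make the vanishing $ab(a+b)=a^2b+ab^2=0$ explicit.

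For the non-Boolean half you take a genuinely different route. The paper argues by contrapositive. Assuming $ab^{q-1}(a^2+b)^q=0$ for all $a,b$ (with $q=2^{m-2}$), the choice $a=1$ shows every $b$ is annihilated by $x^{q-1}(x+1)^q$, so $b^2+b+1$ is a unit. The choice $a=b+1$ then gives $b^q=b^{q-1}$. Replacing $b$ by $b+1$ and using that every $\binom{2^n-1}{i}$ is odd yields $b(b+1)f(b)=0$ with $f$ coprime to $x(x+1)$, whence $b^2=b$.

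You instead exhibit explicit witnesses by splitting on reducedness. If there is $z\neq 0$ with $z^2=0$, then $(a,b)=(z,1)$ gives the value $z$. If $R$ is reduced, then $(a,b)=(1,c)$ with $c^2\neq c$ works, because nonvanishing of $c(1+c)$ passes to $c^{q-1}(1+c)^q$.

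Your observation that $a=1$ alone can fail in the presence of nilpotents (e.g.\ for $q\geq 4$ and $R$ the dual numbers over the field of two elements) is exactly why the paper has to bring in the substitution $a=b+1$. You sidestep this with the nilpotent witness.

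What your approach buys: it is shorter and avoids both the B\'ezout/coprimality step and the binomial parity fact. It also produces concrete elements of order $2^m$ in $G_{2^m}$, uniformly in $m$: $x+zx^2+x^3$ or $x+x^2+cx^3$. This fits the paper's stated aim of indicating elements that realize the exponent.

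What the paper's approach buys: it needs no case distinction. Along the way it extracts structural information, namely that under the identity every element of $R$ is a root of $x^{q-1}(x+1)^q$.
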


\begin{proof} If $R$ is a Boolean ring, then the exponent of $G_{2^m}$ is $2^{m-1}$ for every $m>2$ by Proposition~\ref{pro2}.
Suppose next that the exponent of $G_{2^m}$ is $2^{m-1}$ for some $m>2$. Then, by Proposition~\ref{pro2} applied with $a=1$, $R$
is algebraic over $\Z_2$ and the minimal polynomial over $\Z_2$ of every element of $R$ has no irreducible
factors outside of $x$ and $x+1$. Let $b\in R$ be arbitrary. We claim that $b^{2^{m-2}}=b^{2^{m-2}-1}$. 
Indeed, setting $a=b+1$, then $a^2+b=b^2+b+1$
is invertible in $R$, and from $ab^{2^{m-2}-1}(a^2+b)^{2^{m-2}}=0$, the claim follows.
If $m=3$ we are done. Suppose next $m>3$. 
By above, $(b+1)^{2^{m-2}}=(b+1)^{2^{m-2}-1}$. Since every binomial number ${\binom{2^n-1}{i}}$, $n\geq 1$, is odd, we deduce
$$
b^{2^{m-2}}+1=b^{2^{m-2}-1}+b^{2^{m-2}-2}+\cdots+b+1,
$$
that is
$$
0=b^{2^{m-2}-2}+\cdots+b=b(b+1)f(b),
$$
where $f(x)\in \Z_2[x]$ is relatively prime to $x(x+1)$. Thus $f(b)$ is invertible in $R$, hence $b^2=b$.
\end{proof}

%For instance, if $R=\Z_2[x]$, or $R=\Z_2[x]/(f)$, where $0\neq f\in\Z_2[x]$ has an irreducible factor different from $x$ and $x+1$,
%or $R=\Z_2[x]/(f)$, where $f=x^{2^{m-2}}(1+x)^{2^{m-2}}$, with $m>2$, then the exponent of $G_{2^m}(R)$ is $2^m$. Also, if
%$R=\Z_2[x_1,x_2,\dots]/I$, where $I=(x_1(x_1+1),x_2^2(x_2+1)^2,\dots)$, then the exponent of $G_{2^m}(R)$ is $2^m$
%for any $m>2$. On the other hand, if $R$ is the Boolean ring, then (\ref{b}) shows that the exponent
%of $G_{2^m}(R)$ is $2^{m-1}$ for any $m>2$.

\section{The exponent of $H_r$ when $r\not\equiv 0,-1\mod p$}\label{s5}

\begin{prop}\label{poli} Let $\ell\in\N$ satisfy $\ell<p-1$, take $c\in R$, and consider the polynomials 
$T_0,\cdots,T_\ell,Q\in R[x]$ defined as follows:
$$
T_0=x2x3x\cdots \ell x,\; T_1=(x-c)2x3x\cdots \ell x,\; T_2=(x-c)(2x-c)3x\cdots \ell x,\;\dots,
$$
$$
T_\ell=(x-c)(2x-c)\cdots (\ell x-c),\; Q=T_0+\cdots+T_\ell.
$$ 
Then
$$
Q=(\ell+1)!(x-c/(\ell+1))\cdots (x-c/2).
$$
\end{prop}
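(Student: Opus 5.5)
The identity follows from a telescoping argument: we add the $T_i$ one at a time and keep track of a closed form for the partial sums. The hypothesis $\ell<p-1$ is used only at the very end, to divide by $2,\dots,\ell+1$.

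For $0\le i\le \ell$ set $S_i=T_0+\cdots+T_i$, so that $Q=S_\ell$. We claim that
$$
S_i=(2x-c)(3x-c)\cdots((i+1)x-c)\cdot (i+1)x\,(i+2)x\cdots \ell x ,
$$
where the first product is empty (equal to $1$) when $i=0$, and the second is empty when $i=\ell$.

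We prove the claim by induction on $i$.

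For $i=0$ the right-hand side is $x\cdot 2x\cdots \ell x=T_0$, so the claim holds.

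For the inductive step, let $1\le i\le\ell$ and assume the formula for $S_{i-1}$. Then
$$
S_{i-1}=(2x-c)\cdots(ix-c)\cdot ix\cdot (i+1)x\cdots\ell x,\qquad T_i=(x-c)(2x-c)\cdots(ix-c)\cdot(i+1)x\cdots \ell x .
$$
Both terms share the factor $(2x-c)\cdots(ix-c)\cdot(i+1)x\cdots\ell x$. Pulling it out leaves $ix+(x-c)=(i+1)x-c$, so
$$
S_i=S_{i-1}+T_i=(2x-c)\cdots(ix-c)\,((i+1)x-c)\cdot(i+1)x\cdots\ell x .
$$
This is exactly the claimed formula for $S_i$, which completes the induction.

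Taking $i=\ell$, the trailing product is empty, and we get
$$
Q=S_\ell=(2x-c)(3x-c)\cdots((\ell+1)x-c).
$$
Since $\ell<p-1$, every integer $k$ with $2\le k\le \ell+1\le p-1$ is nonzero modulo $p$, hence invertible in $R$ because $R$ has characteristic $p$. So we may write $kx-c=k\,(x-c/k)$ for each such $k$. Multiplying these out gives
$$
Q=(2\cdot 3\cdots(\ell+1))\,(x-c/2)\cdots(x-c/(\ell+1))=(\ell+1)!\,(x-c/(\ell+1))\cdots(x-c/2),
$$
which is the stated identity.

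The only real step is guessing the closed form for the partial sums $S_i$; once it is written down, the inductive step is a one-line factorization.
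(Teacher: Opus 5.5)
Your proof is correct and is essentially the paper's argument. The paper inducts on $\ell$, writing $Q=Q_{\ell-1}\cdot \ell x+T_\ell$ and combining $\ell x+(x-c)=(\ell+1)x-c$; your partial sum $S_i$ is just the paper's $Q$ for $\ell=i$ multiplied by the common tail $(i+1)x\cdots \ell x$, and the only difference is that you postpone dividing by $2,\dots,\ell+1$ to the end, which shows that $Q=(2x-c)\cdots((\ell+1)x-c)$ holds over any commutative ring and that $\ell<p-1$ is needed only for the final normalization.
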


\begin{proof} The result is clear if $\ell=1$. Suppose $1<\ell<p-1$ and the result is true for $\ell-1$.
Then
$$
\begin{aligned}
Q & =\ell! (x-c/\ell)\cdots (x-c/2)\ell x + (x-c)(2x-c)\cdots (\ell x-c)\\
&=\ell !(x-c/\ell)\cdots (x-c/2) (\ell x+(x-c))
=(\ell+1)!(x-c/(\ell+1))\cdots (x-1/2).
\end{aligned}
$$
\end{proof}

\begin{prop}\label{improve} Suppose that $r\not\equiv 0,-1\mod p$ and
let
$
f=x+a x^{r+1}+b x^{r+t+1}+\cdots.
$ 
Then there are $A,B\in R$ such that
$
f^{(p)}=x+A x^{pr+t+1}+B x^{pr+2t+1}\cdots.
$

Moreover, set $kr\equiv -1\mod p$, where $1\leq k<p$, as well as
$e=pr+2t+1$, and let $j\in J(e)$. Then $j$ is obtained from $j_0=1$ by means
of $p-2$ jumps of length $r$ and 2 jumps of length $r+t$, or $p-1$ jumps of length $r$ and 1 jump of length $r+2t$.
In the latter case, either $j_{k-1}=(k-1)r+1$ and $j_k=kr+2t+1$, or else $j_{k-2}=(k-2)r+1$ and 
$j_{k-1}=(k-1)r+2t+1$, except that if $p=3$ and $r=t$, then $j=(1,r+1,2r+1,5r+1)$ is also possible.
In the former case, either there is some $k<i\leq p$ such that
\begin{equation}
\label{du0}
j_{k-1}=(k-1)r+1,j_{k}=kr+t+1,\dots,j_{i-1}=(i-1)r+t+1,j_{i}=ir+2t+1,\dots,j_p=pr+2t+1,
\end{equation}
or there is some $1\leq i<k-1$ such that 
\begin{equation}
\label{du}
j_{i-1}=(i-1)r+1, j_{i}=ir+t+1,\dots,j_{k-2}=(k-2)r+t+1, j_{k-1}=(k-1)r+2t+1,\dots, j_p=pr+2t+1.
\end{equation}
\end{prop}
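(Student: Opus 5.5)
The plan is to work entirely with the expansion $\Delta^p_{1,d}=\sum_{j\in J(d)}S(j)$, combining two tools. The first is the jump restriction (\ref{dirt}) of Lemma~\ref{dirto}: every jump is $r$, $r+t$, or $>r+t$. The second is Lemma~\ref{York}, which implies that once some $j_i$ (with $i\ge1$) is divisible by $p$, all subsequent $j_{i'}$ must be divisible by $p$ as well. Since $kr\equiv -1\mod p$, the point $kr+1$ is $\equiv 0\mod p$, and $k$ is the unique index in $1\le k\le p-1$ with this property. Also $k\ge 2$, because $r\not\equiv -1$. Throughout I use $r\equiv t\mod p$, so that $d\equiv pr+t+1\equiv r+1\not\equiv 0\mod p$ whenever $d-1\equiv t\mod p$. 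From Proposition~\ref{sub3} we already know $f^{(p)}\in K_{pr+t}$, which gives the existence of $A$.

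\textbf{Step 1: the coefficients strictly between $x^{pr+t+1}$ and $x^{pr+2t+1}$ vanish.} Take $pr+t+1<d<pr+2t+1$ and $j\in J(d)$. The jumps sum to $d-1<pr+2t$. If two jumps had length $\ge r+t$, the sum would be at least $pr+2t$, which is too large. If all jumps had length $r$, the sum would be $pr$, which is too small. Hence exactly one jump has length $r+s$ with $t<s<2t$ (using (\ref{dirt})), and the other $p-1$ jumps have length $r$.

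If that long jump occurs at step $i>k$, then $j_k=kr+1\equiv 0\mod p$. Lemma~\ref{York} then forces $j_{k+1}=j_k+r\equiv 0$, which is impossible. So the long jump occurs at step $i\le k$. The remaining points are then $j_m=mr+1+s$ for $m\ge i$, and $d=pr+1+s$. I will show that no such $j$ survives Lemma~\ref{York}.

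\emph{Case $s\not\equiv t\mod p$.} Then $d\not\equiv pr+t+1$, and the analysis depends on whether some $j_m$ with $i\le m<p$ is divisible by $p$. The residues $mr+1+s$ for $m=i,\dots,p-1$ are pairwise distinct mod $p$, since $r\not\equiv 0$. Whether $0$ is among them must be checked against the requirement that $j_p=d$ is nonzero mod $p$, or that all later points are $0$ mod $p$.

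\emph{Case $s\equiv t$.} Here $j_k=kr+1+s\equiv kr+1+t\equiv 0$ when $i\le k$ (as in Proposition~\ref{sub3}), and then $j_{k+1}\not\equiv 0$ kills $S(j)$, unless $k=p$, which is excluded.

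I expect the generic case to follow by a pigeonhole argument on residues. For the residual configurations where $J(d)$ is nonempty, I would check that the corresponding $\Delta$ entries vanish, or cancel in pairs. This is the step I expect to be the main obstacle. If the congruence bookkeeping does not close, I would fall back on the integral-domain reduction and the commutation argument of Theorem~\ref{kr}, applied to $f^{(p)}$ written against $f$.

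\textbf{Step 2: the shape of $j\in J(e)$ with $e=pr+2t+1$.} The same counting as in Step~1 shows that the jumps sum to $pr+2t$. So either exactly two jumps have length $r+t$ and the rest have length $r$, or a single jump has length $r+2t$ and the rest have length $r$. When $r=t$, further decompositions of $2t$ into excesses are possible. For $p=3$ this is exactly what produces the exceptional $j=(1,r+1,2r+1,5r+1)$, whose last jump has length $3r=r+2t$ placed after $k$. I would list these small cases explicitly.

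\emph{Single long jump.} As in Step~1, the long jump must occur at step $\le k$. If it occurs at step $i$, then the point $kr+1+2t\equiv 2t-t\cdot{}$(residue of $kr+1$)$\cdots$ is divisible by $p$ exactly when the index is arranged appropriately. Tracking $j_m\mod p$ with $2t\equiv 2r$ leaves only two possibilities. Either $j_{k-1}=(k-1)r+1$ and $j_k=kr+2t+1$, or $j_{k-2}=(k-2)r+1$ and $j_{k-1}=(k-1)r+2t+1$. In the second case the point $(k-2)r+2t+1\equiv kr+1\equiv 0$ would otherwise appear too early.

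\emph{Two jumps of length $r+t$.} After the first long jump, the point at index $k-1$ equals $(k-1)r+t+1\equiv kr+1\equiv 0\mod p$. Lemma~\ref{York} then forces the second long jump to occur exactly when leaving this position, or else forces the first long jump to occur at or after $k$. Working through the two orderings gives exactly the patterns (\ref{du0}) and (\ref{du}).
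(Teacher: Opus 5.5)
\textbf{There is a genuine gap in Step~1, which is the heart of the proposition.} Your reduction there matches the paper's Claims~1 and~4: you restrict to sequences with $p-1$ jumps of length $r$ and one jump of length $r+t+i$ ($1\le i<t$), occurring at a step $\le k$. The remaining step is not proved, and the mechanism you propose for it is false. Lemma~\ref{York} does not eliminate all these sequences, and the survivors do not cancel in pairs. Let $j^u$ be the sequence whose long jump is step $u+1$, and let $q\equiv k(1+i)$. Then the sequences passing Lemma~\ref{York} are exactly $j^{q-1},\dots,j^{k-1}$, and there can be several of them, each with $S(j)$ equal to $a^{p-1}g$ times a unit. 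For instance, take $p=7$, $r=t=4$ (so $k=5$, $q=3$) and $i=1$. The sequences $j^2,j^3,j^4$ avoid multiples of $7$, and their only differing factors $j_3j_4j_5$ are $18\cdot22\cdot26$, $13\cdot22\cdot26$, $13\cdot17\cdot26\equiv 6,2,6\pmod 7$. These sum to $0$, but no two of them cancel.

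Two ingredients are missing. First, the exact values $\Delta_{u,v}=au$ for $v-u=r$ and $\Delta_{u,v}=gu$ for $v-u=r+t+i$, where $g$ is the coefficient of $x^{r+t+i+1}$ in $f$; these give $S(j)=a^{p-1}g\,j_1\cdots j_{p-1}$. Second, the polynomial identity of Proposition~\ref{poli}. It turns $\sum_u W(j^u)$ into $Q(r)=(\ell+1)!\prod_{m=2}^{\ell+1}(r-c/m)$, with $c=t+i$ and $\ell=k-q+1\ge 2$, and this vanishes because $\ell r\equiv c$.

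Your fallback could become a genuinely different, shorter proof of the first assertion, but not via Proposition~\ref{com1}. With $s=pr+t$, the leading-term relation $(r-s)aA\equiv 0$ holds automatically and gives no information. Instead, let $g=f^{(p)}$ and let $Cx^{s+1+i}$ be the first term of $g$ after $x^{s+1}$. Compare the coefficients of $x^{r+s+i+1}$ in $f\circ g$ and $g\circ f$. The gaps in $f$ make their difference exactly $(r-s-i)aC\equiv -iaC$. Working over a polynomial ring whose indeterminates are the coefficients of $f$, you may cancel $a$, and you get $C=0$ provided $p\nmid i$. As written, you carry out neither route.

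That proviso $p\nmid i$ matters, and your Case $s\equiv t$ is wrong. When $s\equiv t\equiv r$ and the long jump precedes step $k$, the point forced to be $\equiv0$ is $j_{k-1}=(k-1)r+1+s\equiv kr+1$, not $j_k$. The sequence whose long jump is exactly step $k$ then passes Lemma~\ref{York}. This case arises only when $t>p$, and there the claimed vanishing actually fails. For $p=3$, $r=7$, $t=4$ and $f=x+x^8$, direct composition gives $f^{(3)}\equiv x+x^{29}\bmod x^{30}$, although $pr+t+1=26<29<30=pr+2t+1$. So the statement must be read with $t<p$, as in Theorem~\ref{casorgeneral}. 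The paper uses this tacitly when it calls $i\equiv 0\bmod p$ absurd, and your argument must invoke it too.

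Step~2 follows the paper's residue bookkeeping, but two points need fixing. First, when $r=t$ there is no additional way to split the excess $2t$, since every nonzero excess is $\ge t$. The $p=3$ exception comes instead from $r+2t\equiv 3r\equiv 0\bmod 3$, which allows the jump of length $r+2t$ to occur after $j_k$. Second, in the two-jump case the second jump of length $r+t$ must \emph{arrive} at index $k-1$, giving $j_{k-2}=(k-2)r+t+1$ and $j_{k-1}=(k-1)r+2t+1$. Leaving $j_{k-1}=(k-1)r+t+1\equiv 0$ by any jump is fatal, since $r,2r\not\equiv 0\bmod p$.
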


\begin{proof} We begin by showing that $f^{(p)}$ has the stated shape. 
By Proposition \ref{sub3}, it suffices to prove that given any $1\leq i<t$, we have $\Delta^p_{1,d}=0$ for 
$d=pr+t+i+1$. By assumption, $r\not\equiv -1\mod p$, so $1<k\leq p-1$.  

Consider the sequences $j^0,\dots,j^{p-1}\in L(d)$, all of which start at 1, end at $d$,
consist of $p-1$ jumps of length $r$ and a single jump of length $r+t+i$, and are defined as follows for any $0\leq u\leq p-1$:
$$
j^u=(j^u_0,\dots,j^u_{p-1})=(1,r+1,\dots,ur+1, (u+1)r+t+i+1,\dots, pr+t+i+1),
$$
so that the single jump of length $r+t+i$ occurs when $j^u_{u+1}-j^u_u=r+t+i$.

\medskip

\noindent{\sc Claim 1.} $J(d)\subseteq\{j^0,\dots,j^{p-1}\}$. Indeed, let $j\in J(d)$. If there are $p$ jumps of length $r$, 
then $j_p=pr+1<d$, a contradiction. If there are $s$ jumps of length $r$ for some $0\leq s\leq p-2$, then by~(\ref{dirt}),
$j_p\geq sr+(p-s)(r+t)+1=pr+(p-s)t+1\geq pr+2t+1>d$, which is absurd.

\medskip

Let $g$ stand for the coefficient of $x^{r+t+i+1}$ in $f$.

\medskip

\noindent{\sc Claim 2.} We have $\Delta_{u,v}=au$ if $v-u=r$ and $\Delta_{u,v}=g u$ if $v-u=r+t+i$. Indeed,
if $v-u=r$, then the only way to obtain $x^v$ in $f^u$ is by selecting $x^{r+1}$ once and $x$ the remaining $u-1$ times.
If $v-u=r+t+i$, then $x^v$ can be produced in $f^u$ by choosing $x^{r+t+i+1}$ once and $x$ the remaining $u-1$ times.
A selection of $x$ of $u-3$ times or less will not produce $x^v$ a single time, as $3r$ is already larger than $r+t+i$.
Choosing $x$ exactly $u-2$ times also fails to produce $x^v$. This is so because $2r+t>r+t+i$,
so one would be forced to select $x^{r+1}$ twice, but in this case $2r=r+t+i$ leads to $r=t+i$, which is impossible,
for either $r=pm+t$ with $m\geq 1$ and hence $r>t+i$, or else $r=t$ is different from $t+i$.

\medskip

\noindent{\sc Claim 3.} If $j\in J(d)$, then $S(j)=a^{p-1}g j_1\cdots j_{p-1}$. This follows from Claims 1 and 2.

\medskip

\noindent{\sc Claim 4.} $J(d)\subseteq\{j^0,\dots,j^{k-1}\}$. This is an immediate consequence of Claims 1 through 3,
as any $j^u$, $k\leq u\leq p-1$, satisfies $j^u_k=kr+1$, which is congruent to 0 modulo $p$.

\medskip

We next set $q\equiv k(1+i)$, $1\leq q<p$, noting that 
$p\mid d\Leftrightarrow k(t+i+1)\equiv 0\mod p \Leftrightarrow q=1$, observing as well that $rq\equiv -(1+i)\mod p$.
We also set $c=t+i$.

\medskip

\noindent{\sc Claim 5.} $J(d)\subseteq\{j^{q-1},\dots,j^{p-1}\}$. Indeed, if $0\leq u\leq q-2$,
then $j^u_{q-1}\equiv (q-1)r+t+i+1\equiv 0\mod p$, so $S(j)=0$ by Claim 3.

\medskip

It follows from Claims 4 and 5 that $\Delta^p_{1,d}=0$ if $k<q$, so we assume for the remainder of the
proof that $q\leq k$. Observe that for any $j\in \{j^{q-1},\dots,j^{k-1}\}$, the values
$$j_0=1,j_1=r+1,\dots,j_{q-1}=(q-1)r+1, j_{k+1}=(k+1)r+t+i+1,\dots j_{p-1}=(p-1)r+t+i+1$$
are independent of $j$. Setting $W(j)=j_q\cdots j_k$ for any $j\in L(d)$, we deduce from
Claims 3 through~5 that it suffices to show that $W(j^{q-1})+\cdots+W(j^{k})=0$.
We next apply Proposition \ref{poli} with $\ell=k-q+1$ and $c=t+i$. Following the notation therein and making use
of $qr+t+i+1\equiv r\mod p$, we have
$$
W(j^{q-1})=T_0(r), W(j^{q})=T_1(r),\dots, W(j^{k-1})=T_{k-q}(r), W(j^{k})=T_{k-q+1}(r),
$$
so
$$
W(j^{q-1})+\cdots+W(j^k)=Q(r)=(k-q+2)! (r-c/(k-q+2))\cdots (r-c/2)=0,
$$
since $(k-q+1)r\equiv c\mod p$ and $k-q+1\geq 2$, for $k=q$ leads to $i\equiv 0\mod p$,
which is absurd.

Set $e=pr+2t+1$ and let $j\in J(e)$. The number of jumps involved in $j$ follows from the analysis made in Claim 1
(it is yet not clear when these jumps must occur).

Note that $e\equiv 0\mod p\Leftrightarrow k=2$. 

Assume next $k=2$. We claim that the first 2  
jumps cannot be of length~$r$, unless $p=3$ and $r=t$. 
Otherwise, $j_2=2r+1$ is congruent to 0 modulo $p$; if $p>3$, then none of
$r,r+t,r+2t$ are congruent to 0 modulo $p$, so
Lemma \ref{York} and the foregoing description of $j$ disallow $j_2=2r+1$; if $p=3$, then 
$j=(1,r+1,2r+1, 3r+2t+1)$, and when $r>t$, we see that $\Delta_{j_2,j_3}=0$.
This proves the claim. Also, the first jump cannot be of length $r+t$,
for otherwise $j_1=r+t+1$ is congruent to 0 modulo $p$, and since $r,r+t\not\equiv 0\mod p$, Lemma \ref{York}
implies $j\notin J(d)$, a contradiction. Thus, if $j$ consists of $p-2$ jumps of length $r$ and 2 jumps of length $r+t$,
then $j_1=r+1$, $j_2=2r+t+1$ as in (\ref{du0}), while (\ref{du}) never occurs.
Moreover, if $j$ consists of  $p-1$ jumps of length $r$ and 1 jump of length $r+2t$,
then either $j_1=r+1$ and $j_2=2r+2t+1$, or $j_1=r+2t+1$ and $j_2=2r+2t+1$.
This completes the proof when $k=2$.

Suppose finally that $k\neq 2$. Since $kr+1\equiv 0\mod p$, the first $k$ jumps cannot
be of length~$r$ by Lemma \ref{York}. Assume first that $j$ consists of  $p-1$ jumps of length $r$ and 1 jump of length $r+2t$.
If $j_i=ir+2t+1$ for some $1\leq i\leq k-2$, then $j_{k-2}=(k-2)r+2r+1$ is congruent to 0 modulo $p$, which is impossible.
Thus either $j_{k-1}=(k-1)r+1$ and $j_k=kr+2t+1$, or $j_{k-2}=(k-2)r+1$ and $j_{k-1}=(k-1)r+2t+1$.
Assume next that $j$ consists of $p-2$ jumps of length $r$ and 2 jumps of length $r+t$.
If $j_{k-1}=(k-1)r+1$, then necessarily $j_{k}=kr+t+1$, which is congruent to $t$ modulo $p$,
and the second jump of length $r+t$ may occur at any point afterward as in (\ref{du0}). Suppose, if possible, that there
is some $s<k-1$ such that $j_s=sr+2t+1$. Then $j_{k-2}=(k-2)r+2t+1$ is congruent to 0 modulo $p$, which cannot be. 
Thus, if $j_{k-1}\neq (k-1)r+1$, there is some $1\leq i<k-1$ such (\ref{du}) holds. This completes the proof.
\end{proof}

%\begin{prop}\label{nuevap} {\color{red} Suppose that $p=3$, $r=t$, $r\equiv 1\mod 3$, and let
%$
%f=x+a x^{r+1}+b x^{2r+1}+\cdots.
%$ 
%Then there are $A,B\in R$ such that
%$
%f^{(p)}=x+A x^{4r+1}+B x^{5r+1}\cdots.
%$

%Moreover, set $e=pr+2r+1$, and let $j\in J(e)$. Then $j$ is one of the following four sequences:
%$$
%(1,r+1,2r+1,5r+1), (1,r+1,4r+1,5r+1), (1,3r+1,4r+1,5r+1), (1,r+1,3r+1,5r+1),
%$$
%where the first one is not accounted for in Proposition \ref{improve} and all three others are.
%}
%\end{prop}

\begin{prop}\label{improve2} Suppose that $r\not\equiv 0,-1$ and let
$
f=x+a x^{r+1}+b x^{r+t+1}+\cdots.
$ 
Then
$
f^{(p)}=x+A x^{pr+t+1}+B x^{pr+2t+1}\cdots,
$
where
$
A=-a^{p-1}b,\; B=-a^{p-2}b^2
$
if $r>t$, while if $r=t$, we have
$
A=-a^{p-1}b+\frac{r+1}{2}a^{p+1}$, and $B=-a^{p-2}b^2+\frac{3(r+1)}{2}a^{p}b-\frac{(r+1)^2}{4}a^{p+2}$
if $r\not\equiv (p-1)/2$, $B=-a^{p-2}b^2+3a^pb/4-a^{p+2}/16$ if $r\equiv (p-1)/2$ and $p\neq 3$,
$B=-a b^2+a^2b-a^3b-a^5$ if $p=3$.
\end{prop}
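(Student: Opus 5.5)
The shape $f^{(p)}=x+Ax^{pr+t+1}+Bx^{pr+2t+1}+\cdots$ is already given by Proposition~\ref{improve}, and $A$ is the value computed in Proposition~\ref{sub3}. So only $B=\Delta^p_{1,e}$ with $e=pr+2t+1$ remains. I will compute it by summing $S(j)$ over the explicit list of admissible sequences $j\in J(e)$ supplied by Proposition~\ref{improve}.

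The first step is to determine each factor $\Delta_{u,v}$ that can occur, by counting selections in $f^u$ as in Claim~2 of Proposition~\ref{improve}. For a jump of length $r$ one gets $\Delta_{u,u+r}=ua$. For a jump of length $r+t$ one gets $\Delta_{u,u+r+t}=ub$ if $r>t$, and $ub+\binom{u}{2}a^2$ if $r=t$. For a jump of length $r+2t$ one gets $\Delta_{u,u+r+2t}=uc+\binom{u}{2}(\text{terms})$, where $c$ is the coefficient of $x^{r+2t+1}$ in $f$. The $c$-terms must cancel, since $B$ does not involve $c$. This happens because the two placements of the long jump listed in Proposition~\ref{improve} contribute $a^{p-1}c\,\prod j_i$ with products differing only in one factor, namely $(k-1)r+1$ versus $(k-1)r+2t+1$. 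One checks these sum to zero modulo $p$ using $kr\equiv-1$ and $r\equiv t$.

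When $r>t$, only the $b$ and $c$ terms survive. The remaining contribution comes from the two-$(r+t)$-jump sequences (\ref{du0}) and (\ref{du}). Each gives $a^{p-2}b^2$ times a product of the positions $j_0,\dots,j_{p-1}$. Reducing these modulo $p$ turns the positions into arithmetic progressions in $r$ with shifts by multiples of $t\equiv r$. The sum over the free index $i$ is then a telescoping sum of the type handled by Proposition~\ref{poli}, with $c=t$ or $2t$. Evaluating it via Proposition~\ref{poli} and Wilson's theorem should give $-a^{p-2}b^2$.

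When $r=t$, the extra $\binom{u}{2}a^2$ terms are the main obstacle. They appear in the $(r+t)$-jump factors, and also as $a^2b$ and $a^3$ (and, at the $x^{2r}$ level, further $a$-power) terms inside $\Delta_{u,u+3r}$ for the $r+2t$ jump. These produce the $a^pb$ and $a^{p+2}$ terms of $B$. The plan is to expand each $S(j)$ as a polynomial in $a,b$ and collect the coefficients of $a^{p-2}b^2$, $a^pb$ and $a^{p+2}$ separately. Each coefficient is a sum over $i$ of products of linear forms in $r$ modulo $p$, to be evaluated via Proposition~\ref{poli} and Wilson's theorem, with the inverse of $2$ producing the factors $(r+1)/2$ and $(r+1)^2/4$.

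The split according to $r\equiv(p-1)/2$ occurs because then $2r+1\equiv0$, so $k=2$. In that case the case description of Proposition~\ref{improve} for $k=2$ applies, (\ref{du}) is absent, and the constants change. Finally, $p=3$ with $r=t$ needs the additional sequence $(1,r+1,2r+1,5r+1)$, which I would handle by direct computation of the three factors.
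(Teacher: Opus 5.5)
\paragraph{Where the proposal stands.} Your outline follows the paper's route. The shape of $f^{(p)}$ and the value of $A$ come from Propositions~\ref{improve} and~\ref{sub3}. Then $B=\sum_{j\in J(e)}S(j)$ over the list in Proposition~\ref{improve}. The $c$-terms of the two single-long-jump sequences cancel, because the products differ only in $(k-1)r+1$ versus $(k-1)r+2t+1$, and these sum to $2kr+2+2(t-r)\equiv0$. Wilson's theorem finishes.

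For $r>t$ this works. The two-$(r+t)$-jump sum is really a harmonic sum, not a Proposition~\ref{poli} computation, and that is how the paper handles it. Since $t\equiv r$, every product other than the one for $i=p$ is Wilson's product with one factor $mr+1$ replaced by $r+1$. The total is therefore $-a^{p-2}b^2\bigl(1+(r+1)\sum_{m\neq k-1,k,k+1}(mr+1)^{-1}\bigr)$, and the sum vanishes.

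\paragraph{The gap: the case $r=t$.} You never carry out the expansion; you only assert that it produces the stated constants. It does not, and no argument can, because the statement is false there.

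The long-jump factor at position $u$ is $uc+u(u-1)ab+\binom{u}{3}a^3$, so it contains an $ab$-term, not an $a^2b$-term. Write $F_1,F_2$ for the long-jump factors of the two single-long-jump sequences, and set $u_1=((k-2)r+1)F_1$ and $u_2=((k+1)r+1)F_2$. Then $u_1+u_2=2r^3ab-r^3(r+1)a^3$, and this holds also when $k=2$. With the common factor $-a^{p-1}(r+1)/(2r^3)$, these sequences contribute $-(r+1)a^pb+\frac{(r+1)^2}{2}a^{p+2}$. Adding the two-jump contribution $-a^{p-2}b^2+2(r+1)a^pb-\frac{3(r+1)^2}{4}a^{p+2}$, as computed in the paper, gives
\[
B=-a^{p-2}\Bigl(b-\frac{r+1}{2}a^2\Bigr)^2=-a^{p-2}b^2+(r+1)a^pb-\frac{(r+1)^2}{4}a^{p+2}.
\]
The case $r\equiv(p-1)/2$ plays no special role. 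The paper's proof writes $r^3ab$ (and $-ab/8$ when $k=2$); that slip is the source of the coefficients $\frac{3(r+1)}{2}$ and $\frac34$. For $p=3$, the second and fourth sequences actually give $a^2c+a^3b-za^5$ and $-ab^2+a^3b$, so $B=-ab^2-a^3b-a^5$.

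\paragraph{A check needing no enumeration.} Take $a=1$ and $b=\frac{r+1}{2}$. Then $A=0$, so $f^{(p)}=x+Bx^{pr+2r+1}+\cdots$. Proposition~\ref{com1}, applied to the commuting pair $f,f^{(p)}$, gives $rB=0$, hence $B=0$. The stated formulas instead give $\frac{(r+1)^2}{4}$, $\frac1{16}$, and $1$ (for $p=3$), all nonzero. Concretely, over $\mathbb{F}_3$ the series $f=x+x^2+x^3$ satisfies $f^{(3)}\equiv x \bmod x^7$.

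More generally, compare coefficients of $x^{pr+3r+1}$ in $f*f^{(p)}=f^{(p)}*f$. This yields $aB=A\bigl(b-\frac{r+1}{2}a^2\bigr)$. Working over the polynomial ring in the coefficients of $f$, this gives the corrected $B$ at once. The same comparison at $x^{pr+r+2t+1}$ gives $aB=bA$, hence $B=-a^{p-2}b^2$, when $r>t$.

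So your plan, carried out correctly, proves the corrected formula rather than the stated one. The later applications in the paper take $b=0$, where the two formulas agree.
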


\begin{proof} That $f^{(p)}$ has the stated shape follows from Proposition \ref{improve}.
Moreover, the value of $A$ is given in Proposition \ref{sub3}. It remains confirm the value of $B$.
Set $kr\equiv -1\mod p$, $1<k<p$. We exclude the case $p=3$ and $r=t$ until further notice.

Let $d=pr+2t+1$. We write $j^1$ and $j^2$ for the 2 sequences consisting of $p-1$ jumps of length $r$ and 1 jump of length $r+2t$
described in Proposition \ref{improve}, in the order listed therein. We proceed to compute $S(j^1)+S(j^2)$.

Let $j\in\{j^1,j^2\}$. If $r>t$,  then $r=pm+t$, with $m\geq 1$, and we see that 
the coefficient of $x^{j_{i+1}}$ in $f^{j_i}$ can be obtained in one and
only one way. However, if $r=t$, then this 
coefficient can be achieved in three different ways, namely using $x^{r+1}$ thrice, or $x^{r+1}$ and $x^{2r+1}$ once each, or $x^{3r+1}$ once, and this happens precisely when $j=j^1$ and $i=k-1$, or when $j=j^2$ and $i=k-2$.
Let $c$ be the coefficient of $x^{r+2t+1}$ appearing in $f$. 
Thus if $r>t$, we have
$$
\Delta_{j^1_{i}, j^1_{i+1}}=\begin{cases} (ir+1)a & \text{ if }0\leq i<k-1,\\ ((k-1)r+1)c & \text{ if }i=k-1,\\
(ir+2t+1)a & \text{ if }k-1<i<p,\end{cases} \Delta_{j^2_{i}, j^2_{i+1}}=\begin{cases} (ir+1)a & \text{ if }0\leq i<k-2,\\ ((k-2)r+1)c & \text{ if }i=k-2,\\
(ir+2t+1)a & \text{ if }k-2<i<p,\end{cases} 
$$
while if $r=t$, then
$$
\Delta_{j^1_{i}, j^1_{i+1}}=\begin{cases} (ir+1)a & \text{ if }0\leq i<k-1,\\ ((k-1)r+1)c+\binom{(k-1)r+1}{(k-1)r-1,1,1}ab+\binom{(k-1)r+1}{3}a^3 & \text{ if }i=k-1,\\
((i+2)r+1)a & \text{ if }k-1<i<p,\end{cases}
$$
and
$$
\Delta_{j^2_{i}, j^2_{i+1}}=\begin{cases} (ir+1)a & \text{ if }0\leq i<k-2,\\ ((k-2)r+1)c+\binom{(k-2)r+1}{(k-2)r-1,1,1}ab+
\binom{(k-2)r+1}{3}a^3 & \text{ if }i=k-2,\\
((i+2)r+1)a & \text{ if }k-2<i<p,\end{cases}
$$
where if $k=2$ the last 2 multinomial numbers are understood to be 0.

Suppose first that $r>t$, and set $g=ca^{p-1}g_1g_2$, where $g_1$ is the product of all $ir+1$ with $0\leq i\leq k-2$,
and $g_2$ is the product of all $ir+2t+1$ with $k-1<i<p$. Then by above $S(j^1)+S(j^2)=g((k-1)r+1+(k-1)r+2t+1)=0$.

Suppose next that $r=t$, and set $h=a^{p-1}h_1h_2$, where $h_1$ is the product of all $ir+1$ with $0\leq i<k-2$,
and $h_2$ is the product of all $(i+2)r+1$ with $k-1<i<p$. We also set
$$u_1=((k-2)r+1)(((k-1)r+1)c+\binom{(k-1)r+1}{(k-1)r-1,1,1}ab+\binom{(k-1)r+1}{3}a^3),
$$
$$u_2=((k+1)r+1)(((k-2)r+1)c+\binom{(k-2)r+1}{(k-2)r-1,1,1}ab+\binom{(k-2)r+1}{3}a^3),$$
where, again, if $k=2$ the last 2 multinomial numbers are meant to be equal to 0.

Then by above $S(j^1)+S(j^2)=h(u_1+u_2)$. By Wilson's theorem, the product of all $ir+1$, 
with $0\leq i<p$ and $i\neq k$, is equal to $-1$, and by definition, $h_1h_2$ is obtained from
this product by repeating the factors $1$ (when $i=p-2$) and $r+1$ (when $i=p-1$) and omitting
the factors $(k-2)r+1$, $(k-1)r+1$, and $(k+1)r+1$ (which multiply to $2r^3$). Thus $h=-a^{p-1}(r+1)/2r^3$.
On the other hand, an elementary calculation reveals that $u_1+u_2=r^3ab-r^3(r+1)a^{3}$ if $k\neq 2$,
while $u_1+u_2=-ab/8+a^3/16$ if $k=2$. Therefore, if $k\neq 2$, we have
\begin{equation}
\label{B1}
S(j^1)+S(j^2)=(-\frac{a^{p-1}(r+1)}{2r^3})( {\color{red}r^3ab}-r^3(r+1)a^{3})=-{\color{red}\frac{(r+1)}{2}}a^pb+
\frac{(r+1)^2}{2}a^{p+2},
\end{equation}
while if $k=2$, then 
\begin{equation}
\label{C1}
S(j^1)+S(j^2)=(-\frac{a^{p-1}(r+1)}{2r^3})(-ab/8+a^3/16)=-a^pb/4+a^{p+2}/8.
\end{equation}

We next compute that sum of all $S(j)$, where $j$ runs through all $p-2$ sequences described in (\ref{du0})~and~(\ref{du}).
For this purpose, we abandon the above meaning for $j^1$ and $j^2$, and let $j^i$, with $k<i\leq p$ or $1\leq i<k-1$, stand for the 
$p-2$ sequences described in (\ref{du0})~and~(\ref{du}), 
setting $S(i)=S(j^i)$. By above, if $r>t$, then $B$ is the sum of the $S(i)$, while if $r=t$, then $B$ is the sum of the 
$S(i)$ and either (\ref{B1}) if $k\neq 2$ or (\ref{C1}) if $k=2$.

Suppose first that $r>t$. Then by Wilson's theorem, 
$$
S(p)=a^{p-2}b^2 (r+1)(2r+1)\cdots ((k-1)r+1)((k+1)r+1)\cdots (pr+1)=-a^{p-2}b^2.
$$
Moreover, 
$$
S(p-1)=S(p)(r+1)/(pr+1),\dots, S(k+1)=S(p)(r+1)/((k+2)r+1),
$$
$$
S(k-2)=S(p)(r+1)/((k-2)r+1),\dots,S(1)=S(p)(r+1)/(r+1).
$$
Thus
$$
B=S(p)+S(p)(r+1)\big(\frac{1}{pr+1}+\cdots+\frac{1}{(k+2)r+1}+\frac{1}{(k-2)r+1}+\cdots+\frac{1}{r+1}\big).
$$
The parenthetical expression is the sum of all units in $\Z_p$, namely 0, minus 
$\frac{1}{(k+1)r+1}+\frac{1}{(k-1)r+1}=\frac{1}{r}+\frac{1}{-r}=0$. We conclude that $B=S(p)=-a^{p-2}b^2$ when $r>t$.

Suppose next that $r=t$. Referring to (\ref{du0}), note that $x^{j^i_{i}}$ in $f^{j^i_{i-1}}$ and $x^{j^i_k}$ in $f^{j^i_{k-1}}$ can be obtained using $x^{2r+1}$ once or $x^{r+1}$ twice. Thus
$$
S(p)=a^{p-2} (r+1)(2r+1)\cdots (((k-1)r+1)b+\binom{(k-1)r+1}{2}a^2)((k+1)r+1)\cdots ((pr+1)b+\binom{pr+1}{2}a^2).
$$

As $pr+1\equiv 1\mod p$ the last factor reduces to $b$. Taking common factor $-r$ in the $(k-1)$th term we obtain 
$S(p)=-a^{p-2}b^2+\frac{r+1}{2}a^pb$, by Wilson's Theorem. Moreover, setting $S'(p)=-a^{p-2}b+\frac{r+1}{2}a^p$, we have
$$
S(p-1)=S'(p)(r+1)\left(\frac{b}{pr+1}+\frac{(p-1)ra^2}{2(pr+1)}\right), \dots, 
$$
$$
S(k+1)=S'(p)(r+1)\left(\frac{b}{(k+2)r+1}+\frac{(k+1)ra^2}{2((k+2)r+1)}\right),
$$

$$
S(k-2)=S'(p)(r+1)\left(\frac{b}{(k-2)r+1}+\frac{(k-3)ra^2}{2((k-2)r+1)}\right),\dots, 
S(1)=S'(p)(r+1)\frac{b}{r+1}.
$$
Thus, letting $T=\{1\leq i\leq p\,|\, i\neq k, i\neq k-1\}$ and $U=T\setminus \{k+1\}$, we see that
$$
\sum_{i\in T} S(i)=S(p)+S'(p)(r+1)\left(\sum_{i\in U} \frac{b}{ir+1}+\sum_{i\in U}\frac{(i-1) ra^2}{2(ir+1)}\right).
$$
Here the same argument used in the case $r>t$ yields
$$
\sum_{i\in U} \frac{b}{ir+1}=0.
$$
Regarding the second sum, since $r\neq -1\mod p$, the function $\varphi:\Z_p\setminus\{k\}\to \Z_p\setminus\{1/2\}$, given by
$$
\varphi(i)=\frac{(i-1)r}{2(ir+r)},
$$
is a bijection. Since the sum of all elements of $\Z_p$ is 0, we infer that
$$
\sum_{i\in U}\frac{(i-1) ra^2}{2(ir+1)}=a^2(-\frac{1}{2}-\frac{kr}{2(k+1)r+1}-\frac{(k-2)r}{2((k-1)r+1)})=-3a^2/2.
$$
Hence
\begin{equation}
\label{B2}
\sum_{i\in T} S(i)=
-a^{p-2}b^2+2(r+1)a^{p}b-\frac{3(r+1)^2}{4}a^{p+2}.
\end{equation}
Addition of (\ref{B1}) or (\ref{C1}) with (\ref{B2}) yields the stated expression of $B$ when $r=t$. This completes the proof
of all cases but $p=3$ and $r=t$. If $p=3$ and $r=t$, then any $j\in J(d)$ is one of the following:
$$
(1,r+1,2r+1,5r+1), (1,r+1,4r+1,5r+1), (1,3r+1,4r+1,5r+1), (1,r+1,3r+1,5r+1),
$$
and the corresponding values $S(j)$ are respectively equal to
$$
-a^5q, a^2c-a^3b-a^5z, -a^2c, -ab^2+a^2b,
$$
where $2r+1=3q$ and $r-1=3z$. Thus $q+z=r$ is congruent to 1 modulo 3, and therefore $B=-ab^2+a^2b-a^3b-a^5$.
This completes the proof.
\end{proof}

\begin{cor}\label{formu} Suppose that $r\not\equiv 0,-1\mod p$.
Take any $m\in\N$.
Then if $f\equiv x+x^{r+1}\mod x^{2r+2}$, we have
$$
f^{(p^m)}=x+((r+1)/2)^m x^{p^m r+p^{m-1}r+\cdots+pr+r+1}-((r+1)/2)^{m+1} x^{p^m r+p^{m-1}r+\cdots+pr+2r+1}+\dots,
$$
while if $f=x+x^{r+1}+x^{r+t+1}+\cdots$ and $r>t$, we have
$$
f^{(p^m)}=x+(-1)^m x^{p^m r+p^{m-1}t+\cdots+pt+t+1}+(-1)^{m} x^{p^m r+p^{m-1}t+\cdots+pt+2t+1}+\dots.
$$
\end{cor}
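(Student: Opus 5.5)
The plan is induction on $m$, iterating Proposition \ref{improve2} and tracking how its hypotheses are inherited at each step. Since $r\not\equiv 0,-1\mod p$ forces $p$ to be odd, Proposition \ref{improve2} (and Proposition \ref{improve}) apply throughout. The structural fact to use is the following. If $f=x+a x^{r+1}+b x^{r+t+1}+\cdots$ with $r\equiv t\mod p$ and $r\geq t$, then $f^{(p)}=x+A x^{r'+1}+B x^{r'+t+1}+\cdots$ with $r'=pr+t$. This is again of the required shape with the \emph{same} $t$. Moreover $r'\equiv t\equiv r\mod p$, so $r'\not\equiv 0,-1\mod p$ still holds, and $r'=pr+t>t$ since $r>0$. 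Hence after the first step we are always in the case $r>t$. There Proposition \ref{improve2} gives $A=-a^{p-1}b$ and $B=-a^{p-2}b^2$. The exponents also behave as claimed: if $s_m$ denotes the exponent reached after $m$ steps, then $s_{m+1}=ps_m+t$, which unrolls to $p^m r+p^{m-1}t+\cdots+pt+t$.

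\textbf{Second formula.} Here $r>t$ and $a=b=1$. I would prove by induction that after $m$ steps $a_m=b_m=\varepsilon:=(-1)^m$. The inductive step is the computation
$$A=-\varepsilon^{p-1}\varepsilon=-\varepsilon,\qquad B=-\varepsilon^{p-2}\varepsilon^2=-\varepsilon^{p}=-\varepsilon,$$
which uses that $p$ is odd. This gives exactly the stated coefficients $(-1)^m$ at $x^{s_m+1}$ and $x^{s_m+t+1}$.

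\textbf{First formula.} Now $f\equiv x+x^{r+1}\mod x^{2r+2}$, so we take $t=r$, $a=1$, $b=0$ (the coefficient of $x^{2r+1}$). For the first iterate I would use the $r=t$ branch of Proposition \ref{improve2}. With $b=0$ it gives $A=(r+1)/2=:\alpha$ and $B=-\alpha^2$. In the generic case $r\not\equiv(p-1)/2$ this is read off directly. The two exceptional branches have to be checked separately, and I expect this to be the main (though minor) obstacle:
\begin{itemize}
\item if $r\equiv (p-1)/2$ and $p\neq 3$, then $B=-1/16$, while $\alpha=(1/2)/2=1/4$, so again $B=-\alpha^2$;
\item if $p=3$, then $r\equiv 1$ and $B=-a^5=-1=-\alpha^2$, since $\alpha=1$.
\end{itemize}

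Note that $\alpha$ is a nonzero element of the prime field $\Z_p\subseteq R$, because $r\not\equiv -1\mod p$. Hence $\alpha^{p-1}=1$ and $\alpha^p=\alpha$.

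From the second step on we are in the case $r>t$, with current exponent $pr+r$ and $t=r$. I would show by induction that the coefficients after $m$ steps are $a_m=\alpha^m$ and $b_m=-\alpha^{m+1}$. Indeed,
$$-a_m^{p-1}b_m=\alpha^{m(p-1)}\alpha^{m+1}=\alpha^{m+1},\qquad -a_m^{p-2}b_m^2=-\alpha^{m(p-2)+2m+2}=-\alpha^{mp+2}=-\alpha^{m+2},$$
where the last equality uses $\alpha^p=\alpha$. The exponents become $p^mr+p^{m-1}r+\cdots+r$ and that plus $r$, as claimed.

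Proposition \ref{improve2} only controls the coefficients up to $x^{pr+2t+1}$. This is all that is needed, because at each stage the hypothesis of the next application involves only $A$ and $B$ and the vanishing of the intermediate coefficients. The "$\cdots$" in the conclusion then absorbs everything beyond.
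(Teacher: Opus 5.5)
Your proposal is correct and follows the paper's proof: one application of the $r=t$ case of Proposition~\ref{improve2} with $a=1$, $b=0$, then repeated applications of its $r>t$ case; for the second formula, the $r>t$ case is used from the start with $a=b=1$. Your explicit checks fill in details the paper leaves implicit. These are that the shape and the congruence $r'\equiv t\mod p$ are inherited, that the two exceptional branches for $B$ still give $-\alpha^2$, and that $\alpha^p=\alpha$ is needed to get the stated powers $\alpha^m$.
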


\begin{proof} The first formula follows from the case $r=t$ of Proposition \ref{improve2} applied with $a=1$ and $b=0$,
followed by a repeated application of the case $r>t$ of Proposition \ref{improve2}.  
The second formula follows from a repeated application of the case $r>t$ of
Proposition \ref{improve2} starting with $a=b=1$.
\end{proof}

\begin{cor}
\label{Prop g_i^p char odd}
Suppose $p\neq 2$ and set $g_i=x+x^i$, $i>1$. Then
$$
g_i^{(p)}=\begin{cases}
x+x^{p(i-1)+1}+\dots&\text{if }i\equiv 1\mod p,\\
x+\frac{i}{2}x^{p(i-1)+i}-\frac{i^2}{4}x^{p(i-1)+2i-1}+\dots &\text{if }i\not \equiv 1\mod p.
\end{cases}
$$
\end{cor}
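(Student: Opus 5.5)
The plan is to specialise the three $p$th-iterate results of Sections~\ref{s3} and~\ref{s5} to $f=g_i$, setting $r=i-1$. Since $g_i=x+x^{r+1}$ has no further terms, we may take $t=r$ (legitimate, as $r\equiv t\mod p$ and $r\geq t$). With this choice $a=1$, the coefficient $b$ of $x^{r+t+1}=x^{2r+1}$ is $0$, and every higher coefficient of $f$ is $0$. The exponents in the statement translate as $pr+1=p(i-1)+1$, $pr+t+1=p(i-1)+i$ and $pr+2t+1=p(i-1)+2i-1$. We then split according to the residue of $r$ modulo $p$.

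\emph{Case $i\equiv 1\mod p$.} Here $r\equiv 0\mod p$, and Proposition~\ref{sub1} with $a=1$ gives $g_i^{(p)}=x+x^{pr+1}+\cdots=x+x^{p(i-1)+1}+\cdots$ directly.

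\emph{Case $i\equiv 0\mod p$.} Here $r\equiv -1\mod p$, so $t=r$ has the form $up+(p-1)$ with $u\geq 0$. Proposition~\ref{sub2} applies, and its proof explicitly covers $r=t$. With $b=0$ and all $f_{r+t+1+vp}=0$, it gives vanishing coefficients at $x^{pr+t+1}$, at $x^{pr+t+1+vp}$, and at $x^{pr+2t+1}$. This agrees with the claimed formula, since $i/2$ and $i^2/4$ vanish in $R$ when $p\mid i$. So in this case the displayed terms of the statement are all zero, as they should be.

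\emph{Case $i\not\equiv 0,1\mod p$.} Here $r\not\equiv 0,-1\mod p$, and we invoke the case $r=t$ of Proposition~\ref{improve2} with $a=1$, $b=0$. This gives $A=(r+1)/2=i/2$. For $B$ we check the three subcases separately.
\begin{itemize}
\item If $r\not\equiv(p-1)/2$, then $B=-(r+1)^2/4=-i^2/4$.
\item If $r\equiv(p-1)/2$ and $p\neq 3$, then $i\equiv 1/2\mod p$, and $B=-1/16=-i^2/4$.
\item If $p=3$, then necessarily $r\equiv 1$, so $i\equiv 2$, and $B=-a^5=-1=-4/4=-i^2/4$.
\end{itemize}
In each subcase Proposition~\ref{improve2} also guarantees that no terms occur strictly between $x$ and $x^{pr+t+1}$ or between $x^{pr+t+1}$ and $x^{pr+2t+1}$.

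The only delicate point is the bookkeeping: confirming that the exceptional formulas for $B$ in Proposition~\ref{improve2} collapse to the uniform value $-i^2/4$ once $b=0$, and that the $r=t$ case of Proposition~\ref{sub2} is indeed covered. Both are short residue computations modulo $p$.
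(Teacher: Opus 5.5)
Your proof is correct and follows essentially the same route as the paper: it too sets $r=i-1$, $t=r$, $a=1$, $b=0$ and invokes Proposition~\ref{sub1}, Proposition~\ref{sub2}, and, for $r\not\equiv 0,-1$, Corollary~\ref{formu} with $m=1$, which is just the $r=t$, $a=1$, $b=0$ case of Proposition~\ref{improve2} that you cite directly. Your explicit check that the exceptional formulas for $B$ (the cases $r\equiv (p-1)/2$ and $p=3$) reduce to $-i^2/4$ when $b=0$ fills in a step the paper leaves implicit.
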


\begin{proof}
Set $r=i-1$, so that $g_i=x+x^{r+1}+0x^{2r+1}$. 
If $r\equiv 0\mod p$ the result follows from Proposition \ref{sub1}. If $r\equiv -1\mod p$, we apply
Proposition \ref{sub2} with $a=1$ and $b=0$. If $r\not\equiv 0,-1\mod p$, we appeal to
Corollary \ref{formu} with $a=1$ and $b=0$. 
\end{proof}

\begin{theorem}\label{casorgeneral} Suppose that $0<t<p-1$
and that for some $m,n\in\N$, we have
$$p^{m-1} r+p^{m-2}t+\cdots+pt+t<n\leq p^m r+p^{m-1}t+\cdots+pt+t.$$ Then the exponent of $H_r$ is exactly $p^m$.
\end{theorem}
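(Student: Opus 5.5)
The upper bound is immediate from Theorem \ref{kr}: here $t$ is the remainder of dividing $r$ by $p$, and $n\leq p^m r+p^{m-1}t+\cdots+pt+t$, so the image of $K_r$ in $G_n$ has exponent at most $p^m$. It therefore suffices to exhibit an element of $H_r$ whose order is not a divisor of $p^{m-1}$. Equivalently, we need $f\in K_r$ such that $f^{(p^{m-1})}$ has a nonzero coefficient in degree at most $n$. The condition $0<t<p-1$ means exactly that $r\not\equiv 0,-1\mod p$. This is the setting of Propositions \ref{improve}, \ref{improve2} and Corollary \ref{formu}.

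We split into the cases $r>t$ and $r=t$. If $r>t$, take $f=x+x^{r+1}+x^{r+t+1}$, which projects into $H_r$ since $r+t+1\leq$ anything relevant (and when $r+t+1>n$ its image is just $x+x^{r+1}$, but we compute in $G$ and then project). By the second formula of Corollary \ref{formu} applied with $m-1$ in place of $m$, we get
$$f^{(p^{m-1})}=x+(-1)^{m-1}x^{p^{m-1}r+p^{m-2}t+\cdots+pt+t+1}+\cdots.$$
The exponent $N=p^{m-1}r+p^{m-2}t+\cdots+t+1$ satisfies $N\leq n$ by hypothesis, and the coefficient $\pm1$ is nonzero since $1\neq 0$ in $R$. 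Hence $\pi_n(f)^{(p^{m-1})}\neq 1$. For $m=1$ the statement reduces to $r<n$, and $f$ itself is nontrivial in $G_n$. If $r=t$, that is $r<p-1$, take $f=x+x^{r+1}$. The first formula of Corollary \ref{formu} with $m-1$ gives leading coefficient $((r+1)/2)^{m-1}$ at degree $p^{m-1}r+\cdots+pr+r+1$, which equals $N$ because $t=r$. This coefficient is a unit in $\mathbb{F}_p\subseteq R$, since $r+1\not\equiv 0\mod p$ and $p$ is odd (when $p=2$ there is no $t$ with $0<t<p-1$).

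The only point requiring care is that the element chosen lies in $H_r$ and that truncation commutes with iteration. Since $\pi_n$ is a homomorphism, $\pi_n(f)^{(p^{m-1})}=\pi_n(f^{(p^{m-1})})$, and the nonvanishing coefficient at degree $N\leq n$ survives. The substantive work, namely the shape of $f^{(p)}$ with its two controlled leading terms, is already contained in Proposition \ref{improve2}. So I expect no real obstacle beyond checking the index bookkeeping: Corollary \ref{formu} is used with $m-1$ iterations, and the degree sum matches the lower bound in the hypothesis.
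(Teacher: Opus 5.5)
Your proposal is correct and is essentially the paper's proof, which simply calls the theorem an immediate consequence of Theorem~\ref{kr} (upper bound) and Corollary~\ref{formu} (lower bound): you use the second formula with $f=x+x^{r+1}+x^{r+t+1}$ when $r>t$ and the first with $f=x+x^{r+1}$ when $r=t$, each with $p^{m-1}$ iterations. Your separate handling of $m=1$ and your check that $((r+1)/2)^{m-1}$ is a nonzero element of the prime field of $R$, at a degree $N\leq n$, are exactly the details the paper leaves implicit.
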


\begin{proof} Immediate consequence of Theorem \ref{kr} and Corollary \ref{formu}.
\end{proof}

%\begin{note} If we manage to get a shape for $\alpha$ that repeats itself after rising to the $p$th power
%in Propositions \ref{improve2} and Proposition \ref{improve3}, then Proposition \ref{improve} can be eliminated
%and we do not need to ask that $t|p-1$. Then Corollary \ref{formu} can be stated without this hypothesis.
%The fact that all coefficients are 0 between $x^{r+t+1}$ and $x^{r+2t+1}$ 
%does not repeat itself without additional assumptions. 
%\end{note}

%\section{Normal subgroups of $G_n(R)$}

\begin{exa}\label{ejemplo} Suppose $p$ is odd and that $R$ has $p^u$ elements for some $u\in\N$.
Then $H_1,\dots, H_{p-1}$ are not regular subgroups of $G_{p^m}$ for any $m\geq 2$. Indeed, let $f=x+x^p$.
If $d>1$ and $j\in J(d)$, then necessarily $j_1=p$, which forces $j_2=p^2,j_3=p^3,\cdots,d=j_p=p^p$,
yielding $f^{(p)}=x+x^{p^p}$ (exactly). Repeating this process, we find that
$f^{(p^{m-1})}=x+x^v$ (exactly),
where $v=p^{p^{m-1}}$. Since $p^{m-1}>m$, we infer $v>p^m$, which implies 
$f^{(p^{m-1})}\in K_{p^m}$. Next let $g=x+x^{2p-1}+\cdots$. Then
$g\in K_p$, so Theorem \ref{je} yields  $g^{(p^{m-1})}\in K_{p^m}$. Let $s\in [\langle f,g\rangle,
\langle f,g\rangle]$. Then Proposition \ref{com1} ensures that $s\in K_p$, and therefore $s^{(p^{m-1})}\in K_{p^m}$
by Theorem \ref{je}. Thus, $f^{(p^{m-1})}g^{(p^{m-1})}s^{(p^{m-1})}\in K_{p^m}$. On the other hand, 
$f*g=x+x^p+x^{2p-1}+\cdots$, so $(f*g)^{(p^{m-1})}=x+(-1)^{m-1}x^{p^m}+\cdots$ by Proposition~\ref{sub2}, which ensures that
$(f*g)^{(p^{m-1})}\notin K_{p^m}$. Thus $H_{p-1}$ is not regular,
and therefore $H_1,\dots, H_{p-1}$ are not regular.

It should be noted that $K_{p-1}^{(p^{m-1})}\subseteq K_{p^m-1}$ by repeated application of Theorem \ref{kr},
and that $(f*g)^{(p^{m-1})}$ is an element of $K_{p-1}^{(p^{m-1})}$ that is not in $K_{p^m}$. On the other hand,
every element of $h\in K_{p-1}$ is of the form $h=h_1 h_2$, where $h_1=x+ap^p$ and $h_2=x+b x^{p+1}+\cdots$ for suitable
$a,b\in R$, and these components satisfy $h_1^{(p^{m-1})},h_2^{(p^{m-1})}\in K_{p^m}$.

It is much easier to see that $H_1,\dots, H_{p-1}$ are not powerful subgroups of $G_{p^m}$ for any $m\geq 2$.
For $1\leq i\leq p-1$, we have $[H_i,H_i]=H_{2i+1}$ and $H_i^p\subseteq H_{pi+i}$, where $2i+1<pi+i<p^m$,
so that $H_i^p$ is properly included in $[H_i,H_i]$.
\end{exa}

%===================================================

\end{document}